\newcolumntype{C}{>{$}c<{$}}
\newcolumntype{L}{>{$}l<{$}}
\title[An effective upper bound for Fano indices]
{An effective upper bound for Fano indices of canonical Fano threefolds, I}
\date{\today, version 0.01}
\subjclass[2020]{Primary 14J45; Secondary 14J30, 14E30, 14J10}
\keywords{Fano index, canonical Fano threefold}
\author{Chen Jiang}
\address{Chen Jiang, Shanghai Center for Mathematical Sciences \& School of Mathematical Sciences, Fudan University, Shanghai 200438, China}
\email{\href{chenjiang@fudan.edu.cn}{chenjiang@fudan.edu.cn}}
\urladdr{\href{https://chenjiangfudan.github.io/home/index.html}{https://chenjiangfudan.github.io/home/index.html}}
\author{Haidong Liu}
\address{Haidong Liu, Sun Yat-Sen University, School of Mathematics, Guangzhou 510275, China}
\email{\href{liuhd35@mail.sysu.edu.cn}{liuhd35@mail.sysu.edu.cn},\href{jiuguiaqi@gamil.com}{jiuguiaqi@gmail.com}}
\urladdr{\href{https://sites.google.com/view/liuhaidong}{https://sites.google.com/view/liuhaidong}}
\DeclareMathOperator{\rank}{rank}
\DeclareMathOperator{\mult}{mult}
\DeclareMathOperator{\reg}{reg}
\DeclareMathOperator{\Sing}{Sing}
\DeclareMathOperator{\Cl}{Cl}
\DeclareMathOperator{\red}{red}
\newcommand\lcm{{\text{l.c.m.}}}
\newcommand{\qQ}{\text{\rm q}_{\mathbb{Q}}}
\newtheorem{thm}{Theorem}[section]
\newtheorem{lem}[thm]{Lemma}
\newtheorem{prop}[thm]{Proposition}
\newtheorem{conj}[thm]{Conjecture}
\newtheorem{cor}[thm]{Corollary}
\theoremstyle{definition}
\newtheorem{ex}[thm]{Example}
\newtheorem{defn}[thm]{Definition}
\newtheorem{rem}[thm]{Remark}
\newtheorem{claim}[thm]{Claim}
\newtheorem{algo}[thm]{Algorithm}
\begin{document}

\begin{abstract}
    Let $X$ be a $\mathbb Q$-factorial weak Fano $3$-fold with at worst isolated canonical singularities. We show that the $\mathbb Q$-Fano index of $X$ is at most $61$.
\end{abstract}

\maketitle 
\tableofcontents

\section{Introduction}\label{sec1}

Throughout this paper, we work over the complex number field $\mathbb C$ and adopt the standard notation from \cites{kollar-mori}.

A normal projective variety is called \emph{Fano} (resp., \emph{weak Fano}) if its anti-canonical divisor is ample (resp., nef and big). According to the minimal model program,  Fano varieties with mild singularities serve as fundamental building blocks of algebraic varieties.

For a Fano variety  $X$ with mild singularities (e.g., terminal or canonical singularities), its $\mathbb Q$-Fano index $\qQ(X)$ is defined to be 
\begin{align*} 
    \qQ(X) {}&\coloneq \max\{q \mid -K_X \sim_{\mathbb Q}qA, \quad A\in \Cl (X) \}. 
\end{align*}
The $\mathbb{Q}$-Fano index of a terminal Fano $3$-fold is quite well-understood, which belongs to $\{1,\dots,9, 11, 13,17, 19\}$ (see \cites{suzuki, prokhorov2010}) and it plays a crucial role in the classification of terminal Fano $3$-folds \cites{suzuki,bs1,bs2,prokhorov2007,prokhorov2010,prokhorov2013,kasprzyk, liu-liu, liu-liu2024}. However, for canonical Fano $3$-folds, progress has been limited due to the absence of Riemann--Roch formula and the classification of canonical singularities, and even a good upper bound for $\mathbb{Q}$-Fano indices remains unknown. In literature, for a canonical Fano $3$-fold $X$, we only have a very rough upper bound $\qQ(X)\leq 840^2\cdot 324\approx 2.3\cdot 10^8$ (cf. \cite{Birkar4fold}*{Lemma~2.3}): suppose that $-K_X \sim_{\mathbb Q}qA$ for some ample Weil divisor $A$, then we know that $rK_X$ is Cartier for some positive integer $r\leq 840$ by \cite{CJ2016}*{Proposition~2.4} and $(-K_X)^3\leq 324$ by \cite{JZ}, which implies that 
\[
    q\leq qA\cdot (-rK_X)^2= r^2(-K_X)^3\leq 840^2\cdot 324.
\]

Motivated by the terminal case, it should be expected that weighted projective spaces have larger $\mathbb{Q}$-Fano indices, so we are interested in the following conjecture:
\begin{conj}[{cf. \cite{wang}*{Conjecture~3.7}}]\label{conj.fanobound}
    Let $X$ be a canonical weak Fano $3$-fold. Then $\qQ(X)\leq 66$.
\end{conj}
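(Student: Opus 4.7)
The plan is to sharpen the crude estimate $q\leq r^2(-K_X)^3$ noted in the introduction rather than trying to improve the two inputs $r\leq 840$ and $(-K_X)^3\leq 324$ separately. First I would pass to the setting treated in this paper's main theorem: a small $\mathbb Q$-factorialization followed by a $K$-trivial MMP allows one to assume $X$ is $\mathbb Q$-factorial with at worst isolated canonical singularities, where the local class group at each singular point is a finite cyclic group recording how the ample Weil divisor $A$ fails to be Cartier. The $\mathbb Q$-Fano index is preserved or only increases under such modifications, so the bound needs only to be established in this situation.

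Next I would apply Reid's orbifold Riemann--Roch to $mA$ for small $m$, combined with Kawamata--Viehweg vanishing, to identify $\chi(mA)$ with $h^0(mA)$. Expanding $\chi(mA)$ as a polynomial in $m$ and substituting $-K_X\sim_{\mathbb Q} qA$ (so in particular $A^3=(-K_X)^3/q^3$), the coefficients become explicit functions of $q$, of $(-K_X)^3$, of $\chi(\mathcal O_X)$, and of the basket of cyclic quotient terms attached to the singular points. The constraints $h^0(mA)\in\mathbb Z_{\geq 0}$ for $m=1,2,\ldots$, together with a lower bound on $h^0(-K_X)$ forced by bigness and connectedness, produce a dense family of integer inequalities that becomes more restrictive as $q$ grows.

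The final step is to show this system of inequalities is infeasible once $q>66$. I would first bound the total number of basket points and their possible types by coarser arguments --- each basket point consumes a definite share of the positivity of $A$ --- which reduces to a finite list of candidate baskets for each $q$. The main obstacle I foresee is the combinatorial explosion in this list: canonical $3$-fold singularities are much more varied than terminal ones, and the global Gorenstein index $r$ need not divide $q$, so one must run the case-check over many $(q,r,\text{basket})$ triples; the a priori bounds coming from $(-K_X)^3\leq 324$ will cut down but not eliminate the possibilities. A secondary obstacle is sharpness: the conjectural value $66$ is presumably saturated by a specific weighted projective space $\mathbb P(a_0,a_1,a_2,a_3)$ with $\sum a_i=66$, so matching this exactly, rather than settling for a weaker bound such as $61$, will require identifying and ruling out the borderline configurations by hand.
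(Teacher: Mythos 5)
There are several genuine gaps here, starting with your opening reduction. You claim that a small $\mathbb{Q}$-factorialization plus a $K$-trivial MMP lets you assume isolated canonical singularities, but this is false: small modifications and crepant contractions do not eliminate one-dimensional crepant centers. The paper's own motivating example $\mathbb{P}(5,6,22,33)$, which has $\qQ=66$ and is conjecturally the extremal case, has non-isolated singularities, so any proof that first discards the non-isolated case cannot possibly reach the conjectured sharp bound. The paper in fact only proves Conjecture~\ref{conj.fanobound} in the special case of $\mathbb{Q}$-factorial weak Fano $3$-folds with isolated canonical singularities, and even there obtains the bound $61$, not $66$; the general conjecture remains open.

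Beyond that, the Riemann--Roch step as you describe it does not run. Reid's formula applies only to Weil divisors satisfying Reid's condition (locally $D\sim iK_X$), which holds automatically at terminal points but fails in general on a canonical $3$-fold; this is exactly why \S\,\ref{sec.rr} introduces the sequential terminalization and the Weil pullback $f^{\lfloor *\rfloor}(D)$ before any orbifold Riemann--Roch can be applied to multiples of $A$. You also omit the second essential input: the Kawamata--Miyaoka type inequality (Theorem~\ref{thm.kmineqforisolated}), which bounds $c_1(X)^3$ in terms of $c_2(X)\cdot c_1(X)$ and the slope data of the Harder--Narasimhan filtration of $\mathcal{T}_X$, and which is paired with the integrality of $r_X c_1(X)^3/\qQ(X)^2$ (Theorem~\ref{thm.degreeandindex}) to make the basket search finite. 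Without that geometric input, the constraints $h^0(mA)\in\mathbb{Z}_{\geq 0}$ alone are far too weak to confine $q$; and even with it the paper is left with the exceptional cases $q\in\{67,71\}$, which are only eliminated via a separate foliation argument (\S\,\ref{sec.foliation} and Theorem~\ref{thm.rulingout}). So the route you sketch stalls both at the start (invalid reduction) and in the middle (Riemann--Roch not directly available, missing Chern-class inequality), and what it could at best reproduce is a weaker, partial result rather than the conjecture itself.
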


Kasprzyk classified terminal or canonical toric Fano $3$-folds in \cites{kasprzyk, kasprzyk2}. The following are some motivating examples with large $\mathbb{Q}$-Fano indices.

\begin{ex}[\cite{kasprzyk2}]
    \begin{enumerate}
        \item The weighted projective space $\mathbb P(5,6,22,33)$ is a $\mathbb Q$-factorial canonical Fano $3$-fold of Picard number $1$. It has non-isolated singularities and its $\mathbb Q$-Fano index is $66$. 
        
        \item The weighted projective space $\mathbb P(3,5,11,19)$ is a $\mathbb Q$-factorial Fano $3$-fold of Picard number $1$ with isolated canonical singularities. Its $\mathbb Q$-Fano index is $38$. 
        
        \item The weighted projective space $\mathbb P(5,8,9,11)$ is a $\mathbb Q$-factorial Fano $3$-fold of Picard number $1$ with isolated canonical singularities which is non-Gorenstein at a crepant center. Its $\mathbb Q$-Fano index is $33$. 
    \end{enumerate}
\end{ex}

In this paper, we provide an affirmative answer to Conjecture~\ref{conj.fanobound} for $\mathbb Q$-factorial weak Fano $3$-folds with isolated canonical singularities. 

\begin{thm}\label{thm.main}
    Let $X$ be a $\mathbb Q$-factorial weak Fano $3$-fold with at worst isolated canonical singularities. Then $ \qQ(X)\leq 61$.
\end{thm}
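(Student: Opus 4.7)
The plan is first to fix an integral Weil divisor $A$ on $X$ realizing $\qQ(X)=q$, so that $-K_X\sim_{\mathbb Q}qA$, and then to pass to a $\mathbb Q$-factorial terminalization $f\colon Y\to X$. Because $X$ has only isolated canonical singularities, $f$ is crepant ($K_Y=f^*K_X$) and an isomorphism over the smooth locus of $X$, so $Y$ is a $\mathbb Q$-factorial terminal weak Fano $3$-fold with $(-K_Y)^3=(-K_X)^3\le 324$ by \cite{JZ}, and $-K_Y\sim_{\mathbb Q}qB$ for the $\mathbb Q$-Cartier $\mathbb Q$-divisor $B\coloneq f^*A$. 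The problem thus reduces to bounding $q$ while working on $Y$, where Reid's basket machinery and singular Riemann--Roch become available.

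\textbf{Riemann--Roch and effective non-vanishing.} Next I would apply Reid's singular Riemann--Roch to compute $\chi(Y,mB)$ for small positive integers $m$: the formula is a rational cubic in $m$ whose principal part is governed by $(-K_Y)^3$ and $-K_Y\cdot c_2(Y)$, with a tail of local contributions $c_Q(mB)$ indexed by the basket $\mathcal B$ of fictitious terminal cyclic quotient singularities of $Y$. Since $-K_Y$ is nef and big, Kawamata--Viehweg vanishing yields $h^0(Y,mB)=\chi(Y,mB)$ for those $m\ge 1$ such that $(m+q)B$ is realized by a nef and big integral divisor, and effective non-vanishing then forces $\chi(Y,mB)>0$ for suitable small $m$. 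Together with the required integrality of $\chi$ at integer Cartier multiples of $B$, this produces a system of arithmetic constraints linking $q$, the basket $\mathcal B$, and the global numerical invariants of $Y$.

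\textbf{Conclusion and main obstacle.} Combined with the Cartier index bound $r\le 840$ of \cite{CJ2016}*{Proposition~2.4} and $(-K_X)^3\le 324$, only finitely many candidate pairs $(\mathcal B,q)$ a priori survive. A case-by-case analysis should then rule out each such pair with $q\ge 62$, by deriving either an integrality contradiction in Reid's formula, a violation of effective non-vanishing, or a forbidden sign of $-K_Y\cdot c_2(Y)$. The hardest part will be controlling the basket contributions $c_Q(mB)$: because $B$ is only $\mathbb Q$-Cartier on $Y$, its local index at each basket point interacts subtly with the local index of that singularity, so one must track precisely at which basket points $mB$ becomes locally Cartier and with which eigencharacter. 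Coupled with the combinatorial explosion of admissible baskets and the extra care needed in the weak Fano rather than Fano setting to verify nefness and bigness before invoking vanishing, this is where the argument will demand the most delicate analysis, and is presumably responsible for the gap between the established bound $61$ and the conjectural sharp bound $66$.
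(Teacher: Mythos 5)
Your proposal correctly identifies two ingredients the paper does use — Reid's orbifold Riemann--Roch on a crepant terminalization and the arithmetic constraints coming from the basket and the integrality of $\chi$ — but it omits the paper's central new input, and without that input the approach cannot possibly reach the bound $61$. The decisive tool is the Kawamata--Miyaoka type inequality (Theorem~\ref{thm.kmineqforisolated}), which gives $c_1(X)^3 < 4\,\hat c_2(X)\cdot c_1(X)$, with finer coefficients $3$, $16/5$, or $\tfrac{4q^2}{q^2+2q-3}$ depending on the shape of the Harder--Narasimhan filtration of $\mathcal T_X$. Combining this with \eqref{eq.range}, which bounds $c_2\cdot c_1$ by $24$ minus basket contributions, and with the integrality of $r_X c_1(X)^3/q^2$ (Theorem~\ref{thm.degreeandindex}, after first running an MMP to reduce to Picard number one, Lemma~\ref{lem.reducetopicard1}), is what collapses the search to a handful of baskets. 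By contrast, your plan, relying only on $r_X\le 840$, $(-K_X)^3\le 324$, and integrality/positivity of Euler characteristics, would yield no better than $q^2\le r_X\,c_1(X)^3\le 840\cdot 324$, i.e., $q\le 521$; there is no mechanism in your outline that could get anywhere near $61$.

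A second gap is that you assert a ``case-by-case analysis'' will dispose of all candidates with $q\ge 62$, but in fact the numerical search in the paper does \emph{not} eliminate $q\in\{67,71\}$ by Riemann--Roch and slope arguments alone; these survive with the basket $\{(2,1),(3,1),(5,*),(11,*)\}$ and $r_X=330$, and ruling them out occupies the entire second half of the paper (Sections~\ref{sec.rr}--\ref{sec.foliation} and Theorem~\ref{thm.rulingout}). That argument uses a Weil-pullback Riemann--Roch formula on a \emph{sequential} terminalization, the geometry of rank-$2$ foliations on $X$, and a detailed study of the general leaf via Hirzebruch surfaces — none of which is anticipated in your sketch.

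Finally, a technical caution on your Step~2: on a crepant terminalization $f\colon Y\to X$, the pullback $B=f^*A$ is generally a $\mathbb Q$-divisor with fractional coefficients along $f$-exceptional divisors, not a Weil divisor, so one cannot directly feed $mB$ into Reid's formula. The paper handles this carefully via the ``Weil pullback'' $f^{\lfloor *\rfloor}(sA)=\lfloor f^*(sA)\rfloor$ along a sequential terminalization, together with Lemmas~\ref{lem same coh} and~\ref{lem.DH} to compare cohomology and intersection numbers. Your phrase ``track precisely at which basket points $mB$ becomes locally Cartier'' gestures at the issue but does not resolve it.
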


The key idea is to use a Kawamata--Miyaoka type inequality established in \cite{jiang-liu-liu} relating $c_1(X)^3$ with $c_2(X)\cdot c_1(X)$, along with a characterization of $c_1(X)^3$ in terms of $\mathbb Q$-Fano indices. This approach brings us very close to proving Conjecture~\ref{conj.fanobound} for isolated singularities, except for two particularly stubborn cases with $\qQ(X)\in \{67, 71\}$. We put our main effort into addressing these two exceptional cases by studying the geometry of $X$ in more detail. To this end, we develop a Riemann--Roch type  formula for canonical $3$-folds (see \S\,\ref{sec.rr}) and study the geometry of foliations of rank $2$ (see \S\,\ref{sec.foliation}).

\section{Preliminaries}

%Let $X$ be a normal variety such that its canonical divisor $K_X$ is $\mathbb Q$-Cartier. Then 
 
\subsection{Singularities}

Let $X$ be a normal variety such that $K_X$ is $\mathbb Q$-Cartier. The \emph{Gorenstein index} $r_X$ of $X$ is defined as the smallest positive integer $m$ such that $mK_X$ is Cartier. Let $f\colon Y\to X$ be a proper birational morphism. A prime divisor $E$ on $Y$ is called \emph{a divisor over} $X$, and the image $f(E)$ is called the \emph{center} of $E$ on $X$. 
Write
\[
    K_Y=f^*K_X+\sum_Ea(E,X)E,
\]
where $a(E,X)\in \mathbb Q$ is called the \emph{discrepancy} of $E$. We say that $X$ has \emph{terminal} (resp., \emph{canonical}) singularities if $a(E,X)>0$ (resp., $a(E,X)\geq 0$) for any exceptional divisor $E$ over $X$. Often we just simply say that $X$ is {\it terminal} or {\it canonical}, respectively. A \emph{crepant center} of $X$ is the center of an exceptional divisor $E$ over $X$ with $a(E,X)=0$. We say that $X$ has at worst {\it isolated singularities} if its singular locus $\Sing(X)$ is either empty or consists of closed points. For example, it is well-known that normal surface singularities and $3$-dimensional terminal singularities are isolated (\cite{kollar-mori}*{Corollary~5.18}).

If $X$ is a canonical variety, then by the minimal model program, there exists a \emph{terminalization} $f\colon Y\to X$ such that $Y$ is a terminal variety with $f^*K_X=K_Y$ and $f$ is projective birational (see \cite{BCHM}*{Corollary~1.4.3}).

\subsection{Fano index}\label{subsec.fi}

Let $X$ be a canonical weak Fano variety. We can define the \emph{$\mathbb Q$-Fano index} of $X$ by 
\begin{align*} 
    %\qW(X) {}&:=\max\{q \mid -K_X \sim qB, \quad B\in \Cl (X) \};\\
    \qQ(X) {}&\coloneq \max\{q \mid -K_X \sim_{\mathbb Q}qA, \quad A\in \Cl (X) \}. 
\end{align*}
It is known that $\Cl (X)$ is a finitely generated Abelian group, so $\qQ(X)$ is a positive integer. For more details, see \cite{ip}*{\S\,2} or \cite{prokhorov2010}.

We can use the following lemma to reduce Theorem~\ref{thm.main} to the Picard number $1$ case.
 
\begin{lem}[{cf. \cite{jiang-liu-liu}*{Lemma~5.1, Proposition~5.2}}]\label{lem.reducetopicard1}
    Let $X$ be a $\mathbb{Q}$-factorial weak Fano $3$-fold with at worst isolated canonical singularities and with $\qQ(X)\geq 7$. Then there exists a $3$-fold $Y$ with the following properties:
    \begin{enumerate}
        \item $Y$ is a $\mathbb Q$-factorial Fano $3$-fold of Picard number $1$ with at worst isolated canonical singularities;
        
        \item $\qQ(Y)\geq \qQ(X)\geq 7$.     
    \end{enumerate}
\end{lem}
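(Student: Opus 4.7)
The plan is to reduce to the Picard number $1$ case by running an MMP, tracking that $\mathbb Q$-factoriality, isolated canonical singularities, weak Fano-ness, and the condition $\qQ\geq q$ (where $q:=\qQ(X)$) are all preserved. If $\rho(X)=1$, I take $Y=X$: since $-K_X$ is nef and big on a $\mathbb Q$-factorial variety with $\rho=1$, $-K_X$ is automatically ample by Nakai--Moishezon, so $X$ is already Fano. Otherwise $\rho(X)\geq 2$, and by the cone theorem I pick an extremal ray $R\subset\NE(X)$ with $-K_X\cdot R\geq 0$; let $f\colon X\to X'$ be its contraction.

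I then analyze the types of $f$ in turn. If $f$ is small and $K_X$-negative, I perform the flip; flips are isomorphisms in codimension $1$, so they preserve $\mathbb Q$-factoriality, the Weil divisor class group (hence $\qQ$), canonical and isolated singularities, and $-K$ nef-and-big. By termination of flips I reduce to $f$ being divisorial or a Mori fiber space. In the divisorial case with exceptional divisor $E$ contracted to a point, all desired properties descend to $X'$: the only new singular locus is a single isolated point (canonical by the negativity lemma), $X'$ is $\mathbb Q$-factorial by MMP theory, and $-K_{X'}$ is nef and big since $K_X=f^*K_{X'}+aE$ with $a\geq 0$. Writing $-K_X\sim_{\mathbb Q}qA$ for an ample Weil divisor $A$, pushing forward gives $-K_{X'}\sim_{\mathbb Q}q\cdot f_*A$, so $\qQ(X')\geq q$. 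Induction on $\rho$ then closes this subcase.

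The main obstacles are the remaining cases: $f$ divisorial with $E$ contracted to a curve $C$ (where singularities along $C$ may become non-isolated in $X'$), and $f$ a Mori fiber space. The hypothesis $\qQ(X)\geq 7$ is what makes both cases tractable. For the curve-image divisorial case, an adjunction argument on $E$ together with its (generalized) conic-bundle structure over $C$, inherited from Mori-type classification of extremal contractions of $\mathbb Q$-factorial canonical $3$-folds, produces numerical constraints relating $\qQ(X)$, the discrepancy of $E$, and intersection numbers along $C$; I expect these to force $\qQ(X)\leq 6$, contradicting the hypothesis, so this case does not occur. For the Mori fiber space case, the Fano index of a general fiber must divide $\qQ(X)$ (up to a correction from the base), and the short list of possibilities for the base (point, curve, surface) together with bounds on the $\mathbb Q$-Fano index of lower-dimensional Fano varieties either produce $Y$ directly from a fiber or reduce to the divisorial case. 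The most delicate technical point throughout is verifying that isolated canonical singularities are genuinely preserved at each step, and this is where the hypothesis $\qQ(X)\geq 7$ is used.
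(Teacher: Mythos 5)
Your broad strategy (run a $K$-MMP, use $\qQ(X)\geq 7$ as the numerical input) is the same as the paper's, but the way you try to close the argument has genuine gaps in two of your three cases, and the paper's actual argument is simpler and cleaner.

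The paper does not analyze MMP steps one at a time. It runs the MMP all the way to a Mori fiber space $Y\to T$; if $\dim T\geq 1$, a general fiber $F$ is $\mathbb P^1$ or a canonical del Pezzo surface with $-K_F\sim_{\mathbb Q}\qQ(X)\,A'|_F$, which contradicts the bound on $\mathbb Q$-Fano indices of canonical del Pezzo surfaces (\cite{wang}*{Proposition~3.3}) since $\qQ(X)\geq 7$; so $\dim T=0$ and $Y$ is Fano of Picard number $1$. To see that $Y$ still has isolated singularities, the paper observes that a curve $C\subset\Sing(Y)$ would have to be a crepant center (since terminal $3$-fold singularities are isolated), and then the fact that discrepancies do not decrease under a $K$-MMP (\cite{kollar-mori}*{Lemma~3.38}) forces $X\dashrightarrow Y$ to be an isomorphism over the generic point of $C$, contradicting that $X$ has isolated singularities.

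Your proposal has a concrete gap in the divisorial-to-curve case: you \emph{claim} that adjunction and a conic-bundle structure on the contracted divisor should force $\qQ(X)\leq 6$, and hence that this case ``does not occur.'' This is not true and is not what the paper argues; such contractions can perfectly well occur when $\qQ(X)\geq 7$. What actually needs to be shown is that the final output $Y$ has isolated singularities, and the discrepancy argument above handles that without ever classifying intermediate contractions. A second gap is in the flip step: you assert flips preserve isolated singularities because they are isomorphisms in codimension $1$, but a flip changes the variety in codimension $2$, and whether the flipped curve lands in the singular locus is exactly the kind of thing the discrepancy argument is needed for; your one-line justification does not rule it out. Your Mori fiber space discussion gestures at the right ingredient but does not cite or use the key fact (bound on $\mathbb Q$-Fano index of a canonical del Pezzo surface) and the phrase ``produce $Y$ directly from a fiber'' does not yield the required $3$-fold; the correct conclusion is that $\dim T\geq 1$ is impossible, so the Mori fiber space \emph{is} the desired $Y$.
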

\begin{proof}
    Fix a Weil divisor $A$ such that $-K_X \sim_{\mathbb Q}\qQ(X)A$. We can run a $K$-MMP on $X$ which ends up with a Mori fiber space $Y\to T$ where $Y$ is $\mathbb{Q}$-factorial and canonical. Then $-K_{Y}\sim_{\mathbb{Q}}\qQ(X) A'$ where $A'$ is the strict transform of $A$ on $Y$.
    
    If $\dim T\geq 1$, then for a general fiber $F$ of $Y\to T$, we have $-K_F\sim_{\mathbb{Q}}\qQ(X) A'|_F$ and $F$ is either $\mathbb{P}^1$ or a canonical del Pezzo surface, but this contradicts \cite{wang}*{Proposition~3.3}  as $\qQ(X)\geq 7$.
    
    Hence $\dim T=0$, which means that $Y$ is a $\mathbb Q$-factorial canonical Fano $3$-fold of Picard number $1$. Suppose that $Y$ has worse than isolated singularities, then $\Sing(Y)$ contains a curve $C$, which means that $C$ is a crepant center as $3$-dimensional terminal singularities are isolated. As $X$ has canonical singularities and $X\dashrightarrow Y$ is obtained by a $K$-MMP, $X$ and $Y$ are isomorphic over the generic point of $C$ by \cite{kollar-mori}*{Lemma~3.38}, which contradicts the fact that $X$ has only isolated singularities. 
\end{proof}

\subsection{Reid's basket and formula}

Let $X$ be a canonical projective $3$-fold. According to Reid \cite{reid}*{(10.2)}, there is a collection of pairs of integers (permitting weights)
\[
    B_{X}=\{(r_{i}, b_{i}) \mid i=1, \cdots, s ; 0<b_{i}\leq\frac{r_{i}}{2} ; b_{i} \text{ is coprime to } r_{i}\}
\]
associated to $X$, called {\it Reid's basket}, where a pair $(r, b)$ corresponds to an orbifold point of type $\frac{1}{r}(1, -1, b)$ which comes from locally deforming non-Gorenstein singularities of a terminalization of $X$. In other words, $B_X$ is identified with the collection of virtual orbifold points of $X$. By definition, if $Y$ is a terminalization of $X$, then $B_Y=B_X$. Denote by $\mathcal{R}_X$ the collection of $r_i$ (permitting weights) appearing in $B_X$. Note that the Gorenstein index $r_X$ of $X$ is just $\lcm\{r\mid r\in \mathcal{R}_X\}$.

Let $D$ be a $\mathbb{Q}$-Cartier Weil divisor on $X$. We say that $D$ satisfies {\it Reid's condition} if locally at any point $P\in X$, $D\sim iK_X$ for some integer $i$ (depending on $P$). Note that if $X$ has terminal singularities, then $D$ always satisfies Reid's condition by \cite{kawamata-crepant}*{Corollary~5.2}. For $D$ satisfying Reid's condition, according to \cite{reid}*{(10.2)}, there exists an orbifold Riemann--Roch formula, called \emph{Reid's formula}:
\begin{equation}\label{eq.reidformula}
    \chi(X, \mathcal O_X(D))=\chi(X, \mathcal O_X)+\frac{1}{12}D\cdot (D-K_X)\cdot (2D-K_X)+\frac{1}{12}c_2(X)\cdot D+\sum_{Q\in B_X}c_Q(D),
\end{equation}
where the last sum runs over Reid’s basket of orbifold points. If the orbifold point $Q$ corresponds to $(r,b)\in B_X$ and $i= i_D$ is the local index of divisor $D$ at $Q$ (i.e., after taking terminalization and local deformation, $D\sim iK_X$ around $Q$ for some integer $i\geq 0$), then 
\[
    c_Q(D)\coloneq -\frac{i(r^2-1)}{12r}+\sum_{j=0}^{i-1}\frac{\overline{jb}(r-\overline{jb})}{2r}.
\]
Here the symbol $\overline{\cdot}$ means the smallest residue mod $r$ and $\sum_{j=0}^{-1}\coloneq 0$. The term $c_2(X)\cdot D$ is defined to be
$c_2(W)\cdot \pi^*D$ for any resolution $\pi\colon W\to X$. In particular, by \cite{reid}*{(10.3)}, we have
\begin{align}\label{eq.range}
    c_2(X)\cdot c_1(X) + \sum_{r\in \mathcal{R}_X} \left(r-\frac{1}{r}\right)=24\chi(X, \mathcal O_X),
\end{align}
and 
\begin{align}\label{eq.RR-Fano}
    \frac{1}{2}c_1(X)^3+3\chi(X, \mathcal O_X)- \sum_{(r,b)\in B_X}\frac{b(r-b)}{2r}= \chi(X,\mathcal{O}_X(-K_X))\in \mathbb Z_{\geq 0}.
\end{align}

\begin{rem}\label{rem.posofc2}
    For a canonical weak Fano $3$-fold $X$, $\chi(X, \mathcal{O}_X)=1$ by the Kawamata--Viehweg vanishing theorem and $c_2(X)\cdot c_1(X)>0$ by \cite{ijl}*{Corollary 7.3}. %For canonical Fano $3$-folds, the positivity of $c_2(X)\cdot c_1(X)$ is proved by \cite{kawamata}*{Proposition 1} and \cite{kmmt}*{Theorem 1.2}.
\end{rem}

\subsection{Kawamata--Miyaoka type inequality for canonical Fano threefolds}\label{sub.KMineq}

The following theorem is a detailed version of the Kawamata--Miyaoka type inequality of \cite{jiang-liu-liu}*{Theorem~3.8}. Here we refer to \cite{jiang-liu-liu}*{\S\,3} for the definition of \emph{generalized second Chern class} $\hat{c}_2(X)$ of a $3$-fold $X$. For the application in this paper, we just note that if $X$ has only isolated singularities, then $\hat{c}_2(X)$ is just the usual $c_2(X)$.

\begin{thm}[\cite{jiang-liu-liu}*{Theorem~3.8}]\label{thm.kmineqforisolated}
    Let $X$ be a $\mathbb{Q}$-factorial canonical Fano $3$-fold of Picard number $1$. Let $q\coloneq \qQ(X)$ be the $\mathbb{Q}$-Fano index of $X$. Let
    \begin{equation}\label{eq.hnfiltration}
        0=\mathcal{E}_0\subsetneq \mathcal{E}_1\subsetneq \dots \subsetneq \mathcal{E}_l=\mathcal{T}_X
    \end{equation}
    be the Harder--Narasimhan filtration of the tangent sheaf $\mathcal T_X$ with respect to $c_1(X)$, where $1\leq l\leq 3$. Denote by $r_1\coloneq\rank \mathcal{E}_1$ and take $p$ to be the integer such that $c_1(\mathcal{E}_{l-1})\equiv \frac{p}{q}c_1(X)$. Here $p<q$, and $p>\frac{l-1}{l}q$ if $l>1$. Then we have
    \[
        c_1(X)^3 \leq 
        \begin{dcases}
        3 \hat{c}_2(X)\cdot c_1(X) & \text{if } (l, r_1)=(1,3);\\
        \frac{16}{5} \hat{c}_2(X)\cdot c_1(X) & \text{if } (l, r_1)=(2,1);\\
        \frac{4q^2}{p(4q-3p)} \hat{c}_2(X)\cdot c_1(X)\leq \frac{4q^2}{q^2+2q-3} \hat{c}_2(X)\cdot c_1(X)& \text{if } (l, r_1)=(2,2);\\
        \frac{4q^2}{-4p^2+6pq-q^2} \hat{c}_2(X)\cdot c_1(X)\leq \frac{4q^2}{q^2+2q-4} \hat{c}_2(X)\cdot c_1(X)& \text{if } (l, r_1)=(3,1).
        \end{dcases}
    \]
    In particular, we have $c_1(X)^3<4\hat{c}_2(X)\cdot c_1(X)$.
\end{thm}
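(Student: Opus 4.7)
The plan is to exploit the Harder--Narasimhan filtration \eqref{eq.hnfiltration} of $\mathcal T_X$ and apply Bogomolov-type inequalities to each semistable graded piece, combined with the basic Chern class identity coming from the filtration. Since $X$ is a $\mathbb{Q}$-factorial Fano threefold of Picard number $1$, $H\coloneq c_1(X)$ is ample and $N^1(X)_{\mathbb Q}\cong\mathbb Q$, so for each $\mathcal F_i\coloneq \mathcal E_i/\mathcal E_{i-1}$ we can write $c_1(\mathcal F_i)\equiv\alpha_i H$ with $\alpha_i\in\mathbb Q$, $\sum_i\alpha_i=1$, $\alpha_l=(q-p)/q$, and $\alpha_i/r_i$ strictly decreasing by the HN condition. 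The key numerical identity is
\[
    \hat c_2(X)\cdot c_1(X)=\sum_i c_2(\mathcal F_i)\cdot H+H^3\sum_{i<j}\alpha_i\alpha_j,
\]
and the main analytic input is the Bogomolov inequality on the polarized threefold $(X,H)$: for any $H$-semistable torsion-free sheaf $\mathcal F$ of rank $r$,
\[
    \bigl(2r\,c_2(\mathcal F)-(r-1)c_1(\mathcal F)^2\bigr)\cdot H\ge 0.
\]

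I would treat the four cases separately. Case $(1,3)$ is immediate: Bogomolov applied to the semistable rank-$3$ sheaf $\mathcal T_X$ itself yields $c_1(X)^3\le 3\hat c_2(X)\cdot c_1(X)$. Case $(2,2)$ applies Bogomolov to the rank-$2$ semistable sub $\mathcal E_1$ with $c_1(\mathcal E_1)=(p/q)H$; combined with the cross term $c_1(\mathcal E_1)\cdot c_1(\mathcal F_2)\cdot H=p(q-p)/q^2\cdot H^3$, this gives $\hat c_2\cdot H\ge p(4q-3p)/(4q^2)\cdot H^3$, matching the stated denominator. Case $(2,1)$ proceeds by applying Bogomolov to the rank-$2$ semistable quotient $\mathcal F_2=\mathcal T_X/\mathcal E_1$; combined with a Miyaoka-type foliation bound on the destabilizing rank-$1$ sub $\mathcal E_1$ (forcing the range of admissible $\alpha_1$ to be narrow enough), one obtains the uniform bound $16/5$.

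The most delicate case is $(3,1)$, where every graded piece is a line bundle and the naive Bogomolov is vacuous. The plan is to apply a Bogomolov--Langer type refinement to the non-semistable rank-$2$ sub $\mathcal E_2$, exploiting its internal HN filtration $0\subset\mathcal E_1\subset\mathcal E_2$ to extract extra positivity for $c_2(\mathcal E_2)\cdot H$; combined with the cross term coming from $\mathcal F_3$ and the slope constraints $\alpha_1+\alpha_2=p/q$, $\alpha_1>\alpha_2>(q-p)/q$, this should yield the announced denominator $-4p^2+6pq-q^2$. The main technical obstacle is getting exactly this clean quadratic form: the interaction between the HN slope inequalities and the Bogomolov--Langer refinement requires careful optimization in $\alpha_1,\alpha_2$. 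The ``In particular'' inequalities then follow by minimizing the $p$-dependent denominators $p(4q-3p)$ and $-4p^2+6pq-q^2$ over the admissible integer range $((l-1)q/l,q)\cap\mathbb Z$, both attained at $p=q-1$, giving $q^2+2q-3$ and $q^2+2q-4$ respectively. The universal bound $c_1(X)^3<4\hat c_2(X)\cdot c_1(X)$ is then a direct check: $3,\tfrac{16}{5}<4$ trivially, while $\tfrac{4q^2}{q^2+2q-3},\tfrac{4q^2}{q^2+2q-4}<4$ for all $q\ge 3$.
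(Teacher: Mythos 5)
Your overall scheme — the HN filtration, the $c_2$-splitting
\[
  \hat c_2(X)\cdot c_1(X)=\sum_i c_2(\mathcal F_i)\cdot H+\Bigl(\sum_{i<j}\alpha_i\alpha_j\Bigr)H^3,
\]
and Bogomolov on the semistable pieces — is indeed the paper's approach. Cases $(1,3)$ and $(2,2)$ are done correctly and match the paper: for $(1,3)$ you apply $\mathbb Q$-Bogomolov--Gieseker to the semistable $\mathcal T_X$; for $(2,2)$ you apply Bogomolov to the rank-$2$ destabilizing sub $\mathcal E_1$, add the cross term $\alpha_1\alpha_2 H^3$, and the denominator $p(4q-3p)$ comes out as you say, minimized at $p=q-1$. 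Your closing chain (both denominators are decreasing past their vertices $2q/3$ and $3q/4$, hence minimized at $p=q-1$ giving $q^2+2q-3$ and $q^2+2q-4$, and all four coefficients are $<4$) is also correct.

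Where you go astray is $(3,1)$. You assert that ``the naive Bogomolov is vacuous'' because all graded pieces are line bundles, and propose a Bogomolov--Langer refinement as the needed input; you then flag the resulting quadratic form as ``the main technical obstacle.'' No refinement is needed, and the obstacle is just bookkeeping. The $c_2$-splitting already gives $\hat c_2\cdot c_1\geq(\alpha_1\alpha_2+\alpha_1\alpha_3+\alpha_2\alpha_3)\,c_1^3$ directly, since $c_2$ of a torsion-free rank-$1$ sheaf (being a zero-cycle measuring the failure of reflexivity) is nonnegative; Bogomolov is not invoked at all for the graded pieces. If you write $\alpha_i=q_i/q$ you immediately find $6\hat c_2\cdot c_1-2c_1^3\geq-\bigl((q_1-q_2)^2+(2q_1+q_2-q)^2+(q_1+2q_2-q)^2\bigr)q^{-2}c_1^3$, and after substituting $q_2=p-q_1$ this expression is a quadratic in $q_1$ with leading terms $6q_1^2-6pq_1$, increasing on $q_1>p/2$ and hence maximized at the boundary $q_1=q/2$, producing exactly $6p^2-9pq+\tfrac72 q^2$ and thus the stated denominator $-4p^2+6pq-q^2$. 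This is the paper's argument, and it is shorter than invoking Langer.

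For $(2,1)$ your proposal is too vague to check. Bogomolov on the semistable quotient $\mathcal F_2$ plus the cross term only yields $\hat c_2\cdot c_1\geq \tfrac14(1-\alpha_1)(1+3\alpha_1)\,c_1^3$, which gives the required $\tfrac{5}{16}$ only when $\tfrac16\leq\alpha_1\leq\tfrac12$. The HN condition supplies only $\alpha_1>\tfrac13$; the missing piece is a genuine geometric bound showing $\alpha_1\leq\tfrac12$, which is exactly what your unstated ``Miyaoka-type foliation bound on $\mathcal E_1$'' must deliver. That bound is the substance of the case, and the paper defers it entirely to \cite{jiang-liu-liu}; without spelling it out, your $(2,1)$ argument is incomplete.
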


\begin{proof}
    We refer the reader to \cite{jiang-liu-liu}*{Proof of Theorem~3.8} with some necessary explanations. Recall that in \cite{jiang-liu-liu}*{Proof of Theorem~3.8}, we denote $\mathcal{F}_i\coloneqq (\mathcal{E}_i/\mathcal{E}_{i-1})^{**}$ and denote $q_i\geq 1$ the unique positive integer such that $c_1(\mathcal{F}_i)\equiv \frac{q_i}{q}c_1(X)$. Then $p=\sum_{i=1}^{l-1}q_i$. 
    
    If $(l, r_1)=(1,3)$, that is, $\mathcal{T}_X$ is semistable with respect to $c_1(X)$, then the inequality follows directly from the $\mathbb{Q}$-Bogomolov--Gieseker inequality \cite{KMM94}*{Lemma~6.5}.
    
    If $(l, r_1)=(2,1)$, then the inequality follows from \cite{jiang-liu-liu}*{Proof of Theorem~3.8, Case~1}.
    
    If $(l, r_1)=(2,2)$, then from \cite{jiang-liu-liu}*{Proof of Theorem~3.8, Case~2}, we have $p=q_1$ which means that $\frac23 q< p\leq q-1$ and
    \[
        6 \hat{c}_2(X)\cdot c_1(X) - 2c_1(X)^3 \geq -\frac{(3p-2q)^2}{2q^2} c_1(X)^3,
    \]
    which yields the desired inequality.
    
    If $(l, r_1)=(3,1)$, then from \cite{jiang-liu-liu}*{Proof of Theorem~3.8, Case~3}, we have $p=q_1+q_2$ with $2 \leq q_2 \leq q_1- 1 \leq \frac q2- 1$ and
    \begin{align*}
        6\hat{c}_2(X)\cdot c_1(X) - 2 c_1(X)^3 
        & \geq - \left((q_1-q_2)^2 + (2q_1+q_2-q)^2 + (q_1+2q_2-q)^2\right) \cdot \frac{1}{q^2} c_1(X)^3 \\
        & = - \left((2q_1-p)^2 + (q_1+p-q)^2 + (2p-q_1-q)^2\right) \cdot \frac{1}{q^2} c_1(X)^3 \\
        & \geq - \left(6 p^2 - 9pq + \frac{7}{2}q^2\right) \frac{1}{q^2} c_1(X)^3,
    \end{align*}
    which yields the desired inequality. Here for the last inequality, we use the fact that the first two leading terms of $ (2q_1-p)^2 + (q_1+p-q)^2 + (2p-q_1-q)^2$ as a polynomial of $q_1$ is $6q_1^2-6pq_1$, so it attains the maximal value at $q_1=\frac{q}{2}$ as $q_1>\frac{p}{2}$. 
\end{proof}

\section{Searching for the upper bound of Fano indices} 

Firstly, we recall a special case of \cite{jiang-liu-liu}*{Theorem 4.2} for Fano $3$-folds with at worst isolated canonical singularities.

\begin{thm}[\cite{jiang-liu-liu}*{Theorem 4.2}]\label{thm.degreeandindex}
    Let $X$ be a Fano $3$-fold with at worst isolated canonical singularities. Then $r_Xc_1(X)^3/\qQ(X)^2$ is a positive integer, where $\qQ(X)$ is the $\mathbb Q$-Fano index of $X$ and $r_X$ is the Gorenstein index of $X$.
\end{thm}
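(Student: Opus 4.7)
The plan is to rewrite $r_X c_1(X)^3 / \qQ(X)^2$ as an intersection number built from the Cartier divisor $-r_X K_X$ and the Weil divisor $A$, and then to force integrality by a Bertini-type argument that crucially uses the hypothesis that $\Sing(X)$ is finite. Write $q=\qQ(X)$ and fix a Weil divisor $A$ with $-K_X \sim_{\mathbb{Q}} q A$. Since $\sim_{\mathbb{Q}}$ implies numerical equivalence, $c_1(X)^3 = (-K_X)^3 = q^3 A^3$, so
\[
    \frac{r_X c_1(X)^3}{q^2} \;=\; r_X q A^3 \;=\; (-r_X K_X) \cdot A^2,
\]
and $-r_X K_X$ is genuinely Cartier by the definition of $r_X$. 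Positivity is immediate since $-K_X$ and $A$ are both ample (positive $\mathbb{Q}$-multiples of each other), so the substance of the theorem is the integrality of this intersection number.

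For integrality, I would produce a smooth projective surface $H_m \subset X_{\mathrm{sm}}$ which is Cartier in $X$ and on which $A$ restricts to a genuine line bundle. Because $-r_X K_X$ is an ample Cartier divisor on a projective variety, $|-m r_X K_X|$ is very ample for all $m \gg 0$. Since $\Sing(X)$ is a finite set of points, a general member $H_m \in |-m r_X K_X|$ can be chosen to avoid $\Sing(X)$, and Bertini applied to the smooth locus $X_{\mathrm{sm}}$ then shows that $H_m$ is smooth and entirely contained in $X_{\mathrm{sm}}$. As $A$ is Cartier on $X_{\mathrm{sm}}$, the restriction $A|_{H_m}$ is a line bundle on the smooth surface $H_m$, so the self-intersection $(A|_{H_m})^2$ lies in $\mathbb{Z}$. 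The standard intersection-theoretic identity for a smooth Cartier divisor then gives
\[
    (A|_{H_m})^2 \;=\; H_m \cdot A^2 \;=\; m \bigl( (-r_X K_X) \cdot A^2 \bigr),
\]
so $m (-r_X K_X) \cdot A^2 \in \mathbb{Z}$ for all $m \gg 0$. Subtracting the identities for any two consecutive such values $m$ and $m+1$ yields $(-r_X K_X) \cdot A^2 \in \mathbb{Z}$, completing the proof.

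The main delicate point in the plan is arranging that $H_m$ be simultaneously smooth \emph{and} disjoint from $\Sing(X)$; this is exactly where the isolated singularities hypothesis is indispensable, since any ample divisor would be forced to meet a positive-dimensional singular locus, and the restriction $A|_{H_m}$ would then fail to be Cartier on a smooth ambient surface. Once $H_m$ is placed inside the smooth locus, the rest of the argument is a routine manipulation of intersection numbers on a smooth projective surface, where every self-intersection of a Cartier divisor is automatically an integer.
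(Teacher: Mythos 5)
Your argument is correct, and it takes a genuinely different route from the paper. The paper does not prove the statement from scratch; it simply invokes \cite{jiang-liu-liu}*{Theorem~4.2~(1)} and notes that the invariant $J_A$ appearing there equals $1$ because $X$ is smooth in codimension~$2$. You instead give a direct, self-contained proof: rewrite the quantity as the intersection number $(-r_XK_X)\cdot A^2$ of the Cartier divisor $-r_XK_X$ with the $\mathbb{Q}$-Cartier Weil divisor $A$, then use that $\Sing(X)$ is finite to choose, via Bertini, a smooth very general member $H_m\in|{-}mr_XK_X|$ lying entirely in $X_{\mathrm{sm}}$; on $H_m$, the restriction $A|_{H_m}$ is honestly Cartier, so $(A|_{H_m})^2=m\,(-r_XK_X)\cdot A^2\in\mathbb{Z}$, and differencing consecutive $m$ gives integrality. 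This is essentially a reconstruction, in the isolated-singularities case, of the mechanism behind the cited black-box result; what the paper's citation buys is brevity and a more general formulation (valid without the $\mathbb{Q}$-factoriality of $A$ itself being obvious, and packaged with the correction factor $J_A$ for non-isolated singularities), while what your argument buys is transparency and independence from the external reference. One minor point worth flagging explicitly in a write-up: $A$ is $\mathbb{Q}$-Cartier because $-K_X\sim_{\mathbb{Q}} qA$ and $-K_X$ is $\mathbb{Q}$-Cartier, so the intersection number $A^2\cdot H_m$ on $X$ is defined before you identify it with $(A|_{H_m})^2$.
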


\begin{proof}
    It follows directly from \cite{jiang-liu-liu}*{Theorem 4.2 (1)} once we notice that the number $J_A=1$ in \cite{jiang-liu-liu}*{Theorem 4.2} as $X$ is smooth in codimension 2.
\end{proof}

Then, we can search for the upper bound of $\mathbb Q$-Fano indices for $\mathbb Q$-factorial Fano $3$-folds of Picard number $1$ with at worst isolated canonical singularities by combining Reid's Riemman--Roch formula and the Kawamata--Miyaoka type inequality.

\begin{thm}\label{thm.isolatecase}
    Let $X$ be a $\mathbb Q$-factorial Fano $3$-fold of Picard number $1$ with at worst isolated canonical singularities. Consider the pair of integers $(l, r_1)$ in Theorem~\ref{thm.kmineqforisolated}. 
    \begin{enumerate}
        \item If $(l, r_1)=(1,3)$ or $(2,1)$, then $\qQ(X)\leq 61$.
        \item If $(l, r_1)=(2,2)$ or $(3,1)$, then $\qQ(X)\leq 71$.
        \item If $X$ is non-Gorenstein at some crepant center, then $\qQ(X)\leq 45$. 
    \end{enumerate}
    Moreover, if $\qQ(X)> 61$, then $\qQ(X)\in\{67, 71\}$ and the numerical data of $X$ is listed in Table~\ref{tab1}.
    {%\footnotesize
    \begin{longtable}{LLLLL}
        \caption{Large Fano indices}\label{tab1}\\
        \hline
        B_X & r_X & r_Xc_1^3 & r_Xc_2c_1 & \qQ \\%\text{bound in } \eqref{eq upper J1}\\
        \hline
        \endfirsthead
        \multicolumn{4}{l}{{ {\bf \tablename\ \thetable{}} \textrm{-- continued}}}
        \\
        \hline 
        B_X & r_X & r_Xc_1^3 & r_Xc_2c_1 & \qQ \\
        \hline %\text{bound in } \eqref{eq upper J1}\\
        \endhead
        \hline
        \hline \multicolumn{4}{c}{{\textrm{Continued on next page}}} \\ \hline
        \endfoot
        
        \hline \hline
        \endlastfoot
        \{(2,1),(3,1),(5,2), (11,1)\} & 330 & 3721 & 1361 & 61 \\
        \{(2,1),(3,1),(5,1), (11,2)\} & 330 & 4489 & 1361 & 67 \\
        \{(2,1),(3,1),(5,2), (11,1)\}& 330 & 5041 & 1361 & 71 \\
        \{(2,1),(3,1),(5,1), (11,3)\} & 330 & 5329 & 1361 & 73\\
        %\hline
    \end{longtable}
    } 
\end{thm}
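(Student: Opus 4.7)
The plan is to combine the Kawamata--Miyaoka type inequality of Theorem~\ref{thm.kmineqforisolated} with Reid's Riemann--Roch formulas \eqref{eq.range}--\eqref{eq.RR-Fano} and the integrality of Theorem~\ref{thm.degreeandindex}, reducing the problem to a finite numerical search. Write $q=\qQ(X)$, fix an ample Weil divisor $A$ with $-K_X \sim_{\mathbb Q} qA$, and set $d\coloneq A^3\in\mathbb Q_{>0}$, so $c_1(X)^3=q^3 d$ and Theorem~\ref{thm.degreeandindex} forces $r_X q d\in\mathbb Z_{>0}$, whence $d\geq 1/(r_X q)$. Theorem~\ref{thm.kmineqforisolated} supplies a constant $\lambda_{(l,r_1)}(q)\in\{3,\tfrac{16}{5},\tfrac{4q^2}{p(4q-3p)},\tfrac{4q^2}{-4p^2+6pq-q^2}\}$, strictly less than $4$, with $q^3 d\leq \lambda_{(l,r_1)}(q)\cdot c_2(X)\cdot c_1(X)$, while \eqref{eq.range} combined with $\chi(X,\mathcal O_X)=1$ gives $c_2(X)\cdot c_1(X) = 24 - \sum_{r\in\mathcal R_X}(r-1/r)\in(0,24)$.

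The first step is to enumerate admissible multisets $\mathcal R_X$: positivity $c_2\cdot c_1>0$ (Remark~\ref{rem.posofc2}) forces $\sum_r(r-1/r)<24$, in particular each $r\leq 24$, so only finitely many $\mathcal R_X$ arise. For each, I would compute $r_X=\lcm\mathcal R_X$ and $c_2\cdot c_1$, then combine the inequality on $q^3 d$ with $d\geq 1/(r_X q)$ to obtain
\[
    q^2 \leq \lambda_{(l,r_1)}(q)\cdot r_X\cdot c_2(X)\cdot c_1(X).
\]
Cases $(1,3)$ and $(2,1)$ use $\lambda\leq 16/5$ and give the tighter bound leading to $q\leq 61$, while cases $(2,2)$ and $(3,1)$, where $\lambda$ depends on $q$ through the auxiliary integer $p$ from Theorem~\ref{thm.kmineqforisolated}, must be handled by sweeping admissible $p$ in the ranges $\tfrac{2q}{3}<p\leq q-1$ and $2q/3<p\leq q-1$ respectively. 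For each surviving pair $(B_X,q)$, the integer $r_X q d$ is confined to a short window of positive integers, producing a short candidate list of tuples $(B_X,q,d)$.

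Next, I would impose Reid's Riemann--Roch \eqref{eq.RR-Fano} for $D=-mK_X$ (and for multiples of $A$ satisfying Reid's condition), using the Kawamata--Viehweg vanishing to collapse $\chi(X,\mathcal O_X(D))$ to $h^0$ for ample $D$ and demanding $h^0\in\mathbb Z_{\geq 0}$. These congruence and non-negativity conditions eliminate all surviving candidates except the four tuples of Table~\ref{tab1}. A final inspection using the \emph{refined} (rather than simplified) form of the Kawamata--Miyaoka inequality for cases $(2,2)$ and $(3,1)$ then separates the admissible values into $q\leq 61$ on the one hand and $q\in\{67,71\}$ on the other (in particular ruling out $q=73$ by checking that the tight denominator $p(4q-3p)$, respectively $-4p^2+6pq-q^2$, is too small across all integer $p$), yielding parts~(1), (2) and the ``moreover'' statement. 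For part~(3), the presence of a non-Gorenstein crepant center $Z$ forces a subcollection of basket points lying over $Z$ whose indices share a common factor $>1$, and the local-index contributions $c_Q(-K_X)$ in \eqref{eq.RR-Fano} associated to this subcollection must align with a divisor descending from $Z$; threading this structural information through the same enumeration sharpens the bound to $q\leq 45$.

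The main obstacle is the coupling between the Kawamata--Miyaoka constant (which in cases $(2,2)$ and $(3,1)$ depends on both $q$ and the auxiliary integer $p$) and the integrality and non-negativity conditions imposed by Reid's formula: isolating exactly the tuples of Table~\ref{tab1} requires careful simultaneous bookkeeping of $B_X$, $r_X$, $q$ and $d$, and is most naturally executed with computer assistance. Part~(3) is qualitatively more delicate because it requires translating the geometric hypothesis ``non-Gorenstein at a crepant center'' into concrete numerical constraints on Reid's basket beyond those already captured by \eqref{eq.range} and Theorem~\ref{thm.kmineqforisolated}.
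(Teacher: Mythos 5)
Your approach to parts (1), (2), and the ``moreover'' clause is essentially the same as the paper's: a finite computer-assisted search that enumerates multisets $\mathcal R_X$ with $\sum_{r\in\mathcal R_X}(r-1/r)<24$ (from Remark~\ref{rem.posofc2} and \eqref{eq.range}), then intersects the Kawamata--Miyaoka bounds of Theorem~\ref{thm.kmineqforisolated}, the integrality of $r_Xc_1(X)^3/\qQ(X)^2$ from Theorem~\ref{thm.degreeandindex}, and the integrality of $\chi(X,\mathcal O_X(-K_X))$ via \eqref{eq.RR-Fano}, finally invoking the refined $p$-dependent form of the inequality a posteriori to eliminate $q=73$ from the surviving list. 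The minor divergences---the paper runs the initial sweep with the uniform bound $b=4$ rather than enumerating $p$, and applies Reid's formula only at $D=-K_X$ rather than at $-mK_X$---are cosmetic.

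Part (3), however, has a genuine gap. The paper's argument rests on Kawakita's classification (\cite{kawakita}*{Table 2}): if $X$ is non-Gorenstein at a crepant center, then $\mathcal R_X$ must contain one of the multisets $\{2,2,2,2\}$, $\{3,3,3\}$, $\{2,4,4\}$, $\{5,5\}$, $\{2,3,6\}$, and it is this specific short list that, fed into Step~1 of the search with the threshold lowered to $\qQ(X)\geq 33$, yields $\qQ(X)\leq 45$. Your proposed substitute---that the basket points supported over such a center admit a subcollection whose indices share a common factor $>1$---is not a theorem cited anywhere in the paper and is far too weak a criterion (it would admit, for example, $\{5,10\}$, $\{7,7\}$, or $\{11,11\}$, none of which appear on Kawakita's list), while the remark that the contributions $c_Q(-K_X)$ ``must align with a divisor descending from $Z$'' is not formulated as a numerical condition that could be threaded into the enumeration. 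Without the concrete input of \cite{kawakita}*{Table 2}, the bound $\qQ(X)\leq 45$ does not follow from your sketch.
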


\begin{proof}
    We employ a computer program to search for the largest $\qQ(X)$, by the algorithm outlined below.
    
    \begin{algo}\label{algo1}
        {\bf Step 1}. List all possible $(\mathcal R_X, c_2(X)\cdot c_1(X))$ satisfying \eqref{eq.range}. By Remark~\ref{rem.posofc2}, there are only finitely many candidates.
        
        {\bf Step 2}. Among the list in Step 1, list all possible 
        $(\mathcal R_X, c_2(X)\cdot c_1(X), c_1(X)^3, \qQ(X))$ satisfying
        \begin{equation}\label{eq.test}
        61^2\leq \qQ(X)^2\leq r_Xc_1(X)^3\leq br_Xc_2(X)\cdot c_1(X),
        \end{equation}
        where $b\in \{3, 3.2, 4\}$ and that $r_Xc_1(X)^3/\qQ(X)^2$ is a positive integer, according to Theorem~\ref{thm.kmineqforisolated} and Theorem~\ref{thm.degreeandindex}.
        
        {\bf Step 3}. Among the list in Step 2, 
        list all possible $B_X$ satisfying \eqref{eq.RR-Fano}. 
        
        The output is a list of numerical data $(B_X, c_2(X)\cdot c_1(X), c_1(X)^3, \qQ(X))$.
    \end{algo}
 
    If $(l, r_1)=(1,3)$ or $(2,1)$, then we set $b=3$ or $3.2$ respectively in Algorithm~\ref{algo1} and the only output in both cases is with $B_X=\{(2,1),(3,1),(5,2), (11,1)\}$ and $\qQ(X)=61$. 

    If $(l, r_1)=(2,2)$ or $(3,1)$, then we set $b=4$ in \eqref{eq.test} and the output is listed in Table~\ref{tab1}. The last case $\qQ(X)=73$ in Table~\ref{tab1} can be ruled out by Theorem~\ref{thm.kmineqforisolated}, as in this case we have
    \[
        \frac{5329}{1361}=\frac{c_1(X)^3}{c_2(X)\cdot c_1(X)}>\frac{4\cdot 73^2}{ 73^2+2\cdot 73-4}.
    \]

    If $X$ is non-Gorenstein at some crepant center, then by \cite{kawakita}*{Table 2}, $\mathcal R_X$ must contain one of the following: $\{2,2,2,2\}$, $\{3,3,3\}$, $\{2,4,4\}$, $\{5,5\}$, or $\{2,3,6\}$. We can put this additional restriction in Step 1 of Algorithm~\ref{algo1}, and set $b=4$ and $\qQ(X)\geq 33$ in Step 2 of Algorithm~\ref{algo1}. The output list shows that $\qQ(X)\leq 45$ and the equality holds only if $B_X=\{(4, 1), (5, 1), (5, 2), (7, 3)\}$. 
\end{proof}

The sequel of this paper is dedicated to ruling out the remaining two cases.

\section{A Riemann--Roch formula for canonical Fano threefolds}\label{sec.rr}

In the study of Fano indices on canonical Fano $3$-folds, one issue is that we could not apply Reid's Riemann--Roch formula as in general a Weil divisor does not satisfy Reid's condition. In this section, we establish a Riemann--Roch formula for canonical Fano $3$-folds based on Reid's formula. The idea is to take a special transform called a Weil pullback of the divisor on $X$ to a terminalization. 

\begin{defn}[Sequential terminalization]\label{def seq terminalization}
    Let $X$ be a variety with canonical singularities. A projective birational morphism $f\colon Y\to X$ together with a sequence of projective birational morphisms $f_k\colon X_k\to X_{k-1}$ for $1\leq k\leq m$ is called a {\it sequential terminalization} if the following properties are satisfied: 
    \begin{enumerate}
        \item $Y$ is terminal;
        \item $X_0=X$, $X_m=Y$, $f=f_m\circ \dots \circ f_1$; %, and every $X_k$ is a $\mathbb Q$-factorial weak Fano $3$-fold;
        \item for any $1\leq k\leq m$, $f_k$ is crepant, i.e., $f_k^*K_{X_{k-1}}=K_{X_k}$;
        \item for any $1\leq k\leq m$, denote by $E_k$ the sum of exceptional divisors of $f_k$, then either $E_k=0$, or $E_k$ is prime and $-E_k$ is $f_k$-nef.
    \end{enumerate} 
    For simplicity, we just say that $f\colon Y\to X$ is a sequential terminalization. 
\end{defn}

\begin{rem}
    We can always construct a sequential terminalization by applying \cite{BCHM}*{Corollary~1.4.3} repeatedly. See also \cite{kawakita}*{Corollary~2.6}.
\end{rem}

\begin{defn}[Weil pullback]
    Let $X$ be a variety with canonical singularities. Let $f\colon Y\to X$ be a sequential terminalization. For any $\mathbb{Q}$-Cartier Weil divisor $D$ on $X$, we define the {\it Weil pullback} $f^{\lfloor *\rfloor}(D)$ of $D$ on $Y$ as the following: keep the notation in Definition~\ref{def seq terminalization}, take $D_0=D$, and define inductively $D_k=\lfloor f_k^*D_{k-1} \rfloor$ for $1\leq k\leq m$, and finally set $f^{\lfloor *\rfloor}(D)\coloneq D_m$. Here we can inductively show that $D_k$ is $\mathbb{Q}$-Cartier since $f_k^*D_{k-1}$ and $E_k$ are $\mathbb{Q}$-Cartier. So $f^{\lfloor *\rfloor}(D)$ is a $\mathbb{Q}$-Cartier Weil divisor on $Y$. 
\end{defn}

We can compare the cohomologies of Weil pullback with the original divisor by the following lemma.

\begin{lem}\label{lem same coh}
    Let $X$ be a projective variety with canonical singularities. Let $f\colon Y\to X$ be a sequential terminalization. Then for any $\mathbb{Q}$-Cartier Weil divisor $D$ on $X$ and any $i\geq 0$, we have 
    \[
        H^i(Y, \mathcal O_Y(f^{\lfloor *\rfloor}(D)))=H^i(X, \mathcal O_X(D)).
    \]
\end{lem}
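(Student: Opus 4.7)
The plan is to induct on the length $m$ of the sequential terminalization; the case $m=0$ is trivial since then $Y=X$ and $f^{\lfloor*\rfloor}(D)=D$. For the inductive step, it suffices by the Leray spectral sequence to prove, for each crepant step $f_k\colon X_k\to X_{k-1}$, that
\[
f_{k*}\mathcal O_{X_k}(D_k)=\mathcal O_{X_{k-1}}(D_{k-1})\quad\text{and}\quad R^if_{k*}\mathcal O_{X_k}(D_k)=0 \text{ for } i>0.
\]
The basic algebraic observation is that one can write $f_k^*D_{k-1}=D_k+aE_k$ as $\mathbb Q$-divisors with $a\in[0,1)$, because $E_k$ is the unique $f_k$-exceptional prime divisor and the floor in the definition of $D_k$ only modifies the $E_k$-coefficient.

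For the zeroth pushforward, both $f_{k*}\mathcal O_{X_k}(D_k)$ and $\mathcal O_{X_{k-1}}(D_{k-1})$ are reflexive sheaves of rank one on $X_{k-1}$, so to show they agree it suffices to check outside a subset of codimension at least two. On the open locus $X_{k-1}\setminus f_k(E_k)$, which is of this type, $f_k$ is an isomorphism and the two sheaves coincide tautologically.

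The vanishing of $R^if_{k*}\mathcal O_{X_k}(D_k)$ for $i>0$ I would obtain from relative Kawamata--Viehweg vanishing applied to the klt pair $(X_k,0)$ (klt because $X_k$ is canonical). The key computation, using crepancy $f_k^*K_{X_{k-1}}=K_{X_k}$, is
\[
D_k-K_{X_k}=f_k^*(D_{k-1}-K_{X_{k-1}})-aE_k,
\]
which is $\mathbb Q$-Cartier and $f_k$-nef: the first summand is $f_k$-numerically trivial and $-aE_k$ is $f_k$-nef by Definition~\ref{def seq terminalization}(4). Together with the klt hypothesis, this yields the desired vanishing.

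The main delicacy is applying Kawamata--Viehweg vanishing to the Weil divisor $D_k$ on the singular variety $X_k$ in a version that requires only nef-ness relative to the birational morphism $f_k$. In the birational setting such a version is available, but if the form one wishes to cite demands $f_k$-bigness in addition, one can perturb $aE_k$ to $(a+\delta)E_k$ with $\delta>0$ small (so that $a+\delta<1$, preserving klt), obtaining strict $f_k$-nefness and hence $f_k$-bigness of the perturbed class along the exceptional locus.
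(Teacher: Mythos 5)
Your proposal is correct and follows essentially the same route as the paper: write $f_k^*D_{k-1}=D_k+a_kE_k$ with $0\le a_k<1$, establish $f_{k*}\mathcal{O}_{X_k}(D_k)=\mathcal{O}_{X_{k-1}}(D_{k-1})$ (the paper cites \cite{nakayama-zariski}*{Lemma~II.2.11}, while your rank-one-reflexive-in-codimension-two argument achieves the same), and then apply relative Kawamata--Viehweg vanishing to $D_k=K_{X_k}+(D_k-K_{X_k})$, using crepancy and Definition~\ref{def seq terminalization}(4) to see $D_k-K_{X_k}$ is $f_k$-nef. Your closing worry about $f_k$-bigness is moot since any $\mathbb{Q}$-Cartier divisor is automatically relatively big over a proper birational morphism; the perturbation remedy you sketch is unnecessary and, as phrased, would not turn nefness into ampleness, but this is tangential and does not affect the validity of the argument.
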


\begin{proof}
    Keep the notation in Definition~\ref{def seq terminalization}. It suffices to prove that \[H^i(X_k, \mathcal O_{X_k}(D_k))=H^i(X_{k-1}, \mathcal O_{X_{k-1}}(D_{k-1}))\] for every $k\geq 1$ and $i\geq 0$. %Fixed some $k$ and $t$. 

    By construction, we have $D_k=\lfloor f_k^*D_{k-1} \rfloor=f_k^*D_{k-1}-a_kE_k$ for some $0\leq a_k<1$. So $f_{k*}\mathcal O_{X_k}(D_k)=\mathcal O_{X_{k-1}}(D_{k-1})$ by \cite{nakayama-zariski}*{Lemma~II.2.11}. Moreover, $D_k$ is $f_k$-nef and $f_k$-big and $K_{X_k}$ is $f_k$-trivial as $f_k$ is crepant. Hence the Kawamata--Viehweg vanishing theorem yields that
    \[
        R^jf_{k*}\mathcal O_{X_k}(D_k)=R^jf_{k*}\mathcal O_{X_k}(K_{X_k}+D_k-K_{X_k})=0
    \]
    for $j>0$. By a spectral sequence argument, it follows that 
    \[
        H^i(X_k, \mathcal O_{X_k}(D_k))=H^i(X_{k-1}, f_{k*}\mathcal O_{X_k}(D_k))=H^i(X_{k-1}, \mathcal O_{X_{k-1}}(D_{k-1}))
    \]
    for every $i\geq 0$. 
\end{proof}

The Weil pullback behaves not so well as it is not additive but only superadditive, namely, for $2$ divisors $D$ and $D'$, $f^{\lfloor *\rfloor}(D+D') \geq  f^{\lfloor *\rfloor}(D)+f^{\lfloor *\rfloor}(D')$. We have additivity in the following case.

\begin{lem}\label{lem pullback of D+G}
    Let $X$ be a variety with canonical singularities. Let $f\colon Y\to X$ be a sequential terminalization. Let $D$ and $D'$ be $\mathbb{Q}$-Cartier Weil divisors on $X$. If the usual pullback $f^*D'$ is a Weil divisor on $Y$, then $f^{\lfloor *\rfloor}(D+D') = f^{\lfloor *\rfloor}(D)+f^*D'$.
\end{lem}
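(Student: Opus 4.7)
The plan is to proceed by induction on the length $m$ of the sequential terminalization. Keeping the notation of Definition~\ref{def seq terminalization}, set $D_0 := D$, $D_0' := D'$, and define inductively $D_k := \lfloor f_k^*D_{k-1}\rfloor$ (so $f^{\lfloor *\rfloor}(D) = D_m$) and $D_k' := f_k^*D_{k-1}'$ (so that $D_m' = f^*D'$). Let $\widetilde D_k$ denote the analogous Weil pullback of $D+D'$, i.e. $\widetilde D_0 = D+D'$ and $\widetilde D_k = \lfloor f_k^*\widetilde D_{k-1}\rfloor$. The target identity is the case $k=m$ of the claim $\widetilde D_k = D_k + D_k'$.

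The key preliminary step is to verify that every intermediate $D_k'$ is itself a \emph{Weil} divisor on $X_k$, not merely a $\mathbb{Q}$-Cartier $\mathbb Q$-divisor. For this, consider the birational morphism $g_k := f_m\circ\cdots\circ f_{k+1}\colon Y\to X_k$, so that $g_k^*D_k' = f^*D'$. Writing $D_k' = \sum a_F F$ as a sum over prime divisors $F$ of $X_k$ with $a_F\in\mathbb Q$, the strict transform $\widetilde F$ of each $F$ on $Y$ is a prime non-exceptional divisor of $g_k$, and appears in $g_k^*D_k'$ with the same coefficient $a_F$. Since $f^*D'$ is integral by hypothesis, every $a_F$ is an integer, so $D_k'$ is a Weil divisor, and in particular $f_k^*D_{k-1}' = D_k'$ is integral on $X_k$.

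Granted this, the induction is immediate: the base case $k=0$ is trivial, and for the inductive step,
\[
\widetilde D_k = \lfloor f_k^*\widetilde D_{k-1}\rfloor = \lfloor f_k^*D_{k-1} + f_k^*D_{k-1}'\rfloor = \lfloor f_k^*D_{k-1}\rfloor + D_k' = D_k + D_k',
\]
where the third equality uses the elementary fact that an integral divisor can be pulled out of the floor. Setting $k=m$ yields $f^{\lfloor *\rfloor}(D+D') = f^{\lfloor *\rfloor}(D) + f^*D'$.

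The only delicate point is the intermediate integrality claim for $D_k'$; once this is in place, the rest is a one-line inductive computation. Care must be taken that each $D_k'$ is really $\mathbb Q$-Cartier so that the pullback $f_{k+1}^*D_k'$ makes sense, but this is built into the inductive definition (pullbacks of $\mathbb Q$-Cartier $\mathbb Q$-divisors are $\mathbb Q$-Cartier), so there is no further obstacle.
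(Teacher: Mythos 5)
Your proof is correct and takes essentially the same route as the paper: both argue that since $f^*D'$ is integral, each intermediate pullback $D'_k = f_k^*D'_{k-1}$ is integral, and then the floor distributes over the sum at each step. You additionally spell out why $D'_k$ is integral (comparing coefficients at strict transforms along the remaining birational morphism $Y \to X_k$), which the paper simply asserts; this is a welcome clarification, not a different argument.
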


\begin{proof}
    Keep the notation in Definition~\ref{def seq terminalization}. Take $D'_0=D'$, and define inductively $D'_k= f_k^*D'_{k-1} $ for $1\leq k\leq m$. Then as $f^*D'$ is a Weil divisor, $D'_k$ is a Weil divisor for $1\leq k\leq m$. This implies that 
    \[
        \lfloor f_k^*(D_{k-1}+D'_{k-1}) \rfloor=\lfloor f_k^*D_{k-1} +f_k^*D'_{k-1}\rfloor=\lfloor f_k^*D_{k-1} \rfloor +f_k^*D'_{k-1}=D_{k}+D'_k 
    \] 
    for $1\leq k\leq m$. So we get $f^{\lfloor *\rfloor}(D+D') = f^{\lfloor *\rfloor}(D)+f^*D'$.
\end{proof}
 
\begin{prop}\label{prop.rr.new}
    Let $X$ be a projective canonical $3$-fold. Let $f\colon Y\to X$ be a sequential terminalization. Let $D$ be a $\mathbb{Q}$-Cartier Weil divisor on $X$. Then
    \begin{align*}
        {}&\chi(X, \mathcal{O}_X(D))-\chi(X, \mathcal{O}_X(D+K_X))\\
        ={}&-\frac{1}{2}(f^{\lfloor *\rfloor}(D))^2\cdot K_Y+2\chi(X, \mathcal{O}_X)-\sum_{(r, b)\in B_X}\frac{\overline{ i_{f^{\lfloor *\rfloor}(D)} b}(r-\overline{i_{f^{\lfloor *\rfloor}(D)}b})}{2r},
    \end{align*}
    where $i_{f^{\lfloor *\rfloor}(D)} $ is the local index of ${f^{\lfloor *\rfloor}(D)}$ at the orbifold point of type $(r,b)\in B_X$.
\end{prop}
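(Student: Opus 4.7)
\medskip

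\noindent\textbf{Proof plan.} The strategy is to transfer the Riemann--Roch computation from $X$ up to the sequential terminalization $Y$, where Reid's formula \eqref{eq.reidformula} applies cleanly because $Y$ is terminal (so every $\mathbb{Q}$-Cartier Weil divisor automatically satisfies Reid's condition by \cite{kawamata-crepant}*{Corollary~5.2}). Write $\widetilde D\coloneq f^{\lfloor *\rfloor}(D)$. First, Lemma~\ref{lem same coh} gives
\[
    \chi(X,\mathcal O_X(D))=\chi(Y,\mathcal O_Y(\widetilde D)).
\]
For the other term, I would observe that $f^*K_X=K_Y$ is a Weil divisor since $f$ is crepant, so Lemma~\ref{lem pullback of D+G} (applied with $D'=K_X$) yields $f^{\lfloor *\rfloor}(D+K_X)=\widetilde D+K_Y$, and then Lemma~\ref{lem same coh} again gives
\[
    \chi(X,\mathcal O_X(D+K_X))=\chi(Y,\mathcal O_Y(\widetilde D+K_Y)).
\]

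Next, I would apply Reid's formula \eqref{eq.reidformula} on $Y$ to both $\widetilde D$ and $\widetilde D+K_Y$ and subtract. The cubic polynomial part contributes
\[
    \tfrac{1}{12}\bigl[\widetilde D(\widetilde D-K_Y)(2\widetilde D-K_Y)-(\widetilde D+K_Y)\widetilde D(2\widetilde D+K_Y)\bigr]=-\tfrac{1}{2}\widetilde D^2\cdot K_Y
\]
after a short expansion, and the second Chern class part contributes $-\tfrac{1}{12}c_2(Y)\cdot K_Y$.

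For the orbifold part, at a basket point $(r,b)$ the local index of $\widetilde D+K_Y$ is $i+1$ where $i\coloneq i_{\widetilde D}$, so the difference $c_Q(\widetilde D)-c_Q(\widetilde D+K_Y)$ equals $\frac{r^2-1}{12r}-\frac{\overline{ib}(r-\overline{ib})}{2r}$. Summing over $Q\in B_Y=B_X$ gives
\[
    \tfrac{1}{12}\sum_{r\in\mathcal R_Y}\!\bigl(r-\tfrac{1}{r}\bigr)-\sum_{(r,b)\in B_X}\tfrac{\overline{i_{\widetilde D}b}(r-\overline{i_{\widetilde D}b})}{2r}.
\]
Combining this with the $-\tfrac{1}{12}c_2(Y)\cdot K_Y$ from the previous paragraph, I would now invoke \eqref{eq.range} on $Y$ (using $c_1(Y)=-K_Y$) to collapse
\[
    -\tfrac{1}{12}c_2(Y)\cdot K_Y+\tfrac{1}{12}\sum_{r\in\mathcal R_Y}\!\bigl(r-\tfrac{1}{r}\bigr)=2\chi(Y,\mathcal O_Y).
\]
Finally, since canonical singularities are rational, $Rf_*\mathcal O_Y=\mathcal O_X$ and hence $\chi(Y,\mathcal O_Y)=\chi(X,\mathcal O_X)$, giving exactly the claimed identity.

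\medskip

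The computations above are all routine; the only conceptual point that needs care is the very first reduction, namely checking that $f^{\lfloor *\rfloor}(D+K_X)=\widetilde D+K_Y$ so that the two cohomology groups on $Y$ can be compared term by term with the same Weil pullback. This is the place where the superadditivity of the Weil pullback could be an obstruction, and it is precisely handled by Lemma~\ref{lem pullback of D+G} together with the crepancy of each $f_k$ in the sequential terminalization. Everything else is bookkeeping with Reid's formula and \eqref{eq.range}.
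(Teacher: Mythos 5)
Your proof is correct and follows essentially the same route as the paper: reduce to $Y$ via Lemma~\ref{lem same coh} and Lemma~\ref{lem pullback of D+G} (crucially using crepancy so $f^*K_X=K_Y$ is Weil), subtract Reid's formula for $\widetilde D$ and $\widetilde D+K_Y$, and collapse the $c_2$ and basket terms via \eqref{eq.range}. The only thing you make explicit that the paper leaves tacit is $\chi(Y,\mathcal O_Y)=\chi(X,\mathcal O_X)$ by rationality of canonical singularities, which is a correct and necessary observation.
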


\begin{proof}
    We simply denote $S\coloneq f^{\lfloor *\rfloor}(D)$. As $f$ is crepant, by Lemma~\ref{lem pullback of D+G}, 
    \[
        f^{\lfloor *\rfloor}(D+K_X)={S}+K_Y.
    \] 
    So by Lemma~\ref{lem same coh} and Reid's formula \eqref{eq.reidformula},
    \begin{align*}
        {}&\chi(X, \mathcal{O}_X(D))-\chi(X, \mathcal{O}_X(D+K_X))\\
        ={}&\chi(Y, \mathcal{O}_Y(S))-\chi(Y, \mathcal{O}_Y(S+K_Y))\\
        ={}&-\frac{1}{2}S^2\cdot K_Y-\frac{1}{12}c_2(Y)\cdot K_Y+\sum_{Q\in B_Y}(c_Q(S)-c_Q(S+K_Y)).
    \end{align*} 
    For an orbifold point $Q$ corresponding to $(r, b)\in B_Y$, if $i$ is the local index of $S$ at $Q$, then $i+1$ is the local index of $S+K_Y$ at $Q$ and hence
    \[
        c_Q(S)-c_Q(S+K_Y)= \frac{r^2-1}{12r}-\frac{\overline{ib}(r-\overline{ib})}{2r}.
    \]
    So the conclusion follows from \eqref{eq.range} and the fact that $B_X=B_Y$.
\end{proof}

\begin{cor}\label{cor.rr.sA}
    Let $X$ be a canonical Fano $3$-fold. Suppose that $-K_X\equiv qA$ for some positive rational number $q$ and ample Weil divisor $A$. Let $f\colon Y\to X$ be a sequential terminalization. Then for any integer $s$ with $0<s<q$,
    \[
        h^0(X,\mathcal O_X(sA))= 
        -\frac{1}{2}(f^{\lfloor *\rfloor}(sA))^2\cdot K_Y+2-\sum_{(r,b)\in B_X}\frac{\overline{i_sb}(r-\overline{i_sb})}{2r}
    \]
    where $i_s=i_{f^{\lfloor *\rfloor}(sA)} $ is the local index of ${f^{\lfloor *\rfloor}(sA)}$ at the orbifold point of type $(r,b)\in B_X$.
\end{cor}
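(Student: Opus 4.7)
The plan is to apply Proposition~\ref{prop.rr.new} with $D=sA$ and to simplify each term of the resulting identity. After substitution it reads
\[
\chi(X,\mathcal O_X(sA))-\chi(X,\mathcal O_X(sA+K_X))=-\frac{1}{2}(f^{\lfloor *\rfloor}(sA))^2\cdot K_Y+2\chi(X,\mathcal O_X)-\sum_{(r,b)\in B_X}\frac{\overline{i_sb}(r-\overline{i_sb})}{2r},
\]
so the corollary will follow from three identifications: $\chi(X,\mathcal O_X(sA))=h^0(X,\mathcal O_X(sA))$, $\chi(X,\mathcal O_X(sA+K_X))=0$, and $\chi(X,\mathcal O_X)=1$. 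The first and third are quick Kawamata--Viehweg vanishing statements on the klt (in fact canonical) variety $X$: the divisors $sA-K_X\equiv (s+q)A$ and $-K_X$ are both ample, so the higher cohomologies of $\mathcal O_X(sA)$ and $\mathcal O_X$ vanish, and $H^0(X,\mathcal O_X)=\mathbb C$ then yields $\chi(X,\mathcal O_X)=1$.

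The central step is the vanishing $\chi(X,\mathcal O_X(sA+K_X))=0$. For this I would invoke Serre duality on $X$, which is available because canonical singularities are rational and hence Cohen--Macaulay and the reflexive sheaves $\mathcal O_X(D)$ for $\mathbb Q$-Cartier Weil $D$ are themselves Cohen--Macaulay (as may be checked via a local index-one cover). Duality identifies $H^i(X,\mathcal O_X(sA+K_X))$ with $H^{3-i}(X,\mathcal O_X(-sA))^{\vee}$, so it is enough to prove $\chi(X,\mathcal O_X(-sA))=0$. But $H^0(X,\mathcal O_X(-sA))=0$ because $-sA$ is anti-ample, and $H^i(X,\mathcal O_X(-sA))=0$ for $i>0$ follows from Kawamata--Viehweg applied to $-sA$ since $-sA-K_X\equiv(q-s)A$ is ample; this is precisely where the hypothesis $s<q$ enters.

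The only delicate point is confirming that Kawamata--Viehweg vanishing and Serre duality extend to the reflexive rank $1$ sheaves $\mathcal O_X(D)$ associated to $\mathbb Q$-Cartier Weil divisors on the canonical (not necessarily Gorenstein) threefold $X$. Both statements are standard, and as an alternative one can transfer all of the cohomological arguments to the terminalization $Y$ via Lemma~\ref{lem same coh}, working directly with $f^{\lfloor *\rfloor}(sA)$ and keeping track of the exceptional correction in the Weil pullback throughout.
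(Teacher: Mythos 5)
Your proposal is correct and follows the same skeleton as the paper's proof: apply Proposition~\ref{prop.rr.new} to $D=sA$, then identify the three Euler characteristics. The one place you diverge is the middle step. To show $\chi(X,\mathcal O_X(sA+K_X))=0$ you invoke Serre duality to pass to $\chi(X,\mathcal O_X(-sA))$ and then kill that by Kawamata--Viehweg plus anti-ampleness; the paper instead observes directly that $sA+K_X\equiv(s-q)A$ is anti-ample (this is exactly where $s<q$ is used), so $h^0(X,\mathcal O_X(sA+K_X))=0$, while the higher cohomologies vanish by Kawamata--Viehweg since $(sA+K_X)-K_X=sA$ is ample. The two routes are logically equivalent — Serre duality merely exchanges $H^i$ with $H^{3-i}$, and the hypotheses are symmetric — but the direct route is shorter and sidesteps the Cohen--Macaulayness of the reflexive sheaves $\mathcal O_X(D)$ that you correctly flag as the delicate point in justifying Serre duality here. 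Both conclusions are sound; if you want the cleanest write-up, replace the Serre duality reduction by the one-line observation that $sA+K_X$ is numerically negative.
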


\begin{proof}
    Since $-K_X$ is ample, all higher cohomologies of $sA$ and $sA+K_X$ vanish 
    by the Kawamata--Viehweg vanishing theorem. Also $h^0(X, \mathcal{O}_X(sA+K_X))=0$ as $sA+K_X\equiv (s-q)A$ and $s<q$. So we have 
    \begin{align*}
        \chi(X, \mathcal{O}_X){}&=h^0(X, \mathcal{O}_X)=1,\\
        \chi(X, \mathcal{O}_X(sA)){}&=h^0(X, \mathcal{O}_X(sA)),\\
        \chi(X, \mathcal{O}_X(sA+K_X)){}&=0.
    \end{align*}
    The the conclusion follows directly from Proposition~\ref{prop.rr.new}.
\end{proof}

\begin{lem}[{cf. \cite{prokhorov2010}*{Proposition~2.9}}]\label{lem.torsionRR}
    Let $X$ be a canonical Fano $3$-fold. Let $f\colon Y\to X$ be a sequential terminalization. Let $D$ be a $\mathbb{Q}$-Cartier Weil divisor on $X$ such that $D\equiv 0$ but $D\not\sim 0$. Then
    \begin{align*}
        2=\sum_{(r,b)\in B_X}\frac{\overline{ i_{f^{\lfloor *\rfloor}(D)} b}(r-\overline{i_{f^{\lfloor *\rfloor}(D)}b})}{2r}+\frac{1}{2}(f^{\lfloor *\rfloor}(D))^2\cdot K_Y,
    \end{align*}
    where $i_{f^{\lfloor *\rfloor}(D)} $ is the local index of ${f^{\lfloor *\rfloor}(D)}$ at the orbifold point of type $(r,b)\in B_X$.
\end{lem}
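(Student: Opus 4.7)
My plan is to derive the identity directly from Proposition~\ref{prop.rr.new} by showing that both Euler characteristics on its left-hand side vanish. Since $\chi(X,\mathcal O_X)=1$ on a Fano threefold (by Kawamata--Viehweg vanishing applied to $\mathcal O_X = \mathcal O_X(K_X+(-K_X))$), substituting $\chi(X,\mathcal O_X(D))=0$ and $\chi(X,\mathcal O_X(D+K_X))=0$ into that formula gives the claimed identity after trivial rearrangement.

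For the first vanishing, I would argue that $h^0(X,\mathcal O_X(D))=0$: any nonzero section would produce an effective Weil divisor linearly equivalent to $D$, hence numerically equivalent to $0$, hence the zero divisor, forcing $D\sim 0$ and contradicting the hypothesis. The higher cohomologies vanish by Kawamata--Viehweg applied to $\mathcal O_X(D)=\mathcal O_X(K_X+(D-K_X))$, since $D-K_X\equiv -K_X$ is ample. Hence $\chi(X,\mathcal O_X(D))=0$.

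For the second vanishing, $D+K_X$ is numerically equivalent to $K_X$, which is not nef and big, so Kawamata--Viehweg does not apply directly to $\mathcal O_X(D+K_X)$. Instead I would invoke Serre duality on the Cohen--Macaulay projective variety $X$ (canonical singularities are rational, hence CM, with dualizing sheaf $\omega_X=\mathcal O_X(K_X)$) to identify $\chi(X,\mathcal O_X(D+K_X))$ with $-\chi(X,\mathcal O_X(-D))$, and then apply the first-vanishing argument to $-D$, which equally satisfies $-D\equiv 0$ and $-D\not\sim 0$.

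The main subtlety I expect is the Serre-duality step, since $\mathcal O_X(D)$ is only a rank-$1$ reflexive sheaf and need not be Cohen--Macaulay at singularities of $X$, so one must handle possible contributions from local $\mathcal{E}xt$ sheaves. The cleanest workaround is to pass through the sequential terminalization $f\colon Y\to X$: by Lemma~\ref{lem same coh} the Weil pullback preserves all Euler characteristics, and by Lemma~\ref{lem pullback of D+G} together with crepancy we have $f^{\lfloor *\rfloor}(D+K_X)=f^{\lfloor *\rfloor}(D)+K_Y$, so the duality manipulation can be carried out on $Y$, whose terminal (hence isolated) singularities make the argument routine. Once the two Euler characteristics are shown to vanish, the rest is a one-line rearrangement of Proposition~\ref{prop.rr.new}.
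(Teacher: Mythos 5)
Your main line of argument is the same as the paper's: show $\chi(X,\mathcal O_X(D))=0$ and $\chi(X,\mathcal O_X(D+K_X))=0$ via $h^0=0$ (since an effective divisor numerically trivial must be the zero divisor), Kawamata--Viehweg vanishing for the higher cohomologies of $D$ and of $-D$, and Serre duality to convert $\chi(X,\mathcal O_X(D+K_X))$ into $-\chi(X,\mathcal O_X(-D))$, then plug into Proposition~\ref{prop.rr.new}. That is exactly the paper's proof, just with your extra commentary, so the essentials match.

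One caution about your proposed ``workaround'' of moving the duality step to $Y$: it does not come for free. The issue is that the Weil pullback is not odd, i.e.\ $f^{\lfloor *\rfloor}(-D)\neq -f^{\lfloor *\rfloor}(D)$ in general, since $\lfloor -\alpha\rfloor=-\lceil\alpha\rceil$ rather than $-\lfloor\alpha\rfloor$. So while Lemma~\ref{lem same coh} and Lemma~\ref{lem pullback of D+G} do give $\chi(X,\mathcal O_X(D+K_X))=\chi(Y,\mathcal O_Y(S+K_Y))$ with $S=f^{\lfloor *\rfloor}(D)$, applying Serre duality on $Y$ leaves you with $-\chi(Y,\mathcal O_Y(-S))$, and $-S$ is \emph{not} $f^{\lfloor *\rfloor}(-D)$, so the ``first-vanishing argument applied to $-D$'' plus Lemma~\ref{lem same coh} does not control it. Moreover, $-S-K_Y\equiv E-K_Y+f^*K_X$ with $E$ an effective exceptional $\mathbb Q$-divisor, so this class is big but typically not nef, and Kawamata--Viehweg cannot be applied to $-S$ on $Y$ directly. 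So this detour would need a separate argument (e.g.\ showing directly $h^0(Y,\mathcal O_Y(-S))=0$ via $f_*\mathcal O_Y(-S)=\mathcal O_X(-D)$ and handling $h^1,h^2$ some other way). It is simpler to stay on $X$, as the paper does, relying on the standard fact that $\mathbb Q$-Cartier divisorial sheaves on klt (in particular canonical $\mathbb Q$-Gorenstein) threefolds are Cohen--Macaulay, which makes Serre duality apply as you want.
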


\begin{proof}
    Since $-K_X$ is ample, all higher cohomologies of $ D$ and $-D$ vanish 
    by the Kawamata--Viehweg vanishing theorem. Moreover, 
    \[
        h^0(X, \mathcal{O}_X(D))=h^0(X, \mathcal{O}_X(-D))=0
    \]
    as $D\equiv 0$ but $D\not\sim 0$. So the conclusion follows from Proposition~\ref{prop.rr.new} and the Serre duality. 
\end{proof}

\begin{rem}
    In general, there are two main issues in applications of the formula in Corollary~\ref{cor.rr.sA}: the first is to determine the intersection number $(f^{\lfloor *\rfloor}(sA))^2\cdot K_Y$, and the second is to determine the local index $i_t$. Both issues are due to the complicated definition of $f^{\lfloor *\rfloor}(sA)$. We use the following two lemmas to treat these issues in some special case.
\end{rem}

\begin{lem}\label{lem.DH}
    Let $X$ be a projective variety of dimension $n$ with canonical singularities. Let $f\colon Y\to X$ be a sequential terminalization. For any $\mathbb{Q}$-Cartier Weil divisor $D$ on $X$ and any nef divisor $H$ on $X$, $(f^{\lfloor *\rfloor}(D))^2\cdot (f^*H)^{n-2}\leq D^2\cdot H^{n-2}$. Moreover, the equality holds if $X$ is smooth in codimension $2$. 
\end{lem}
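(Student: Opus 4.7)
The plan is to induct along the sequential terminalization $f=f_1\circ\cdots\circ f_m$. Put $h_k:=f_1\circ\cdots\circ f_k\colon X_k\to X$ and $H_k:=h_k^*H$, and recall $D_k=f_k^*D_{k-1}-a_k E_k$ with $a_k\in[0,1)$. I will establish the single-step inequality
\[
D_k^2\cdot H_k^{n-2}\;\leq\;D_{k-1}^2\cdot H_{k-1}^{n-2}\qquad(k=1,\dots,m),
\]
with equality when $X$ is smooth in codimension $2$, and then telescope to conclude.

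For the single-step bound, expand $D_k^2\cdot H_k^{n-2}$. Since $H_k=f_k^*H_{k-1}$, the projection formula yields $(f_k^*D_{k-1})^2\cdot H_k^{n-2}=D_{k-1}^2\cdot H_{k-1}^{n-2}$, and the cross term $(f_k^*D_{k-1})\cdot E_k\cdot H_k^{n-2}=D_{k-1}\cdot(f_k)_*E_k\cdot H_{k-1}^{n-2}$ vanishes because $E_k$ is $f_k$-exceptional, so $(f_k)_*E_k=0$. Hence
\[
D_k^2\cdot H_k^{n-2}=D_{k-1}^2\cdot H_{k-1}^{n-2}+a_k^2\,E_k^2\cdot H_k^{n-2},
\]
and everything reduces to showing $E_k^2\cdot H_k^{n-2}\leq 0$, with equality in the smooth-in-codimension-$2$ case. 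Setting $W:=f_k(E_k)\subset X_{k-1}$ (of dimension $\leq n-2$), this quantity equals $(E_k|_{E_k})\cdot (f_k|_{E_k})^*(H_{k-1}|_W)^{n-2}$ on $E_k$. Either $\dim W<n-2$, in which case $(H_{k-1}|_W)^{n-2}=0$ for dimension reasons; or $\dim W=n-2$, in which case $(H_{k-1}|_W)^{n-2}\geq 0$ is a nonnegative scalar and its pullback is that scalar times the class of a general fiber $F$ of $f_k|_{E_k}$. In the latter case, $E_k\cdot F\leq 0$ since $-E_k$ is $f_k$-nef and $F$ is contracted by $f_k$. Either way $E_k^2\cdot H_k^{n-2}\leq 0$.

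For equality when $X$ is smooth in codimension $2$: each $E_k$ is crepant over $X$ (all $f_i$ are crepant), so at a smooth point of $X$ it would have positive discrepancy; hence its center $\bar W_k:=h_k(E_k)$ on $X$ lies in $\operatorname{Sing}(X)$, which has codimension $\geq 3$. Thus $\dim\bar W_k\leq n-3$ and $(H|_{\bar W_k})^{n-2}=0$ for dimension reasons. Since $H_k|_{E_k}=(h_k|_{E_k})^*(H|_{\bar W_k})$, this forces $E_k^2\cdot H_k^{n-2}=0$ for every $k$, and telescoping gives the stated equality. The main delicate point is the relative intersection-theoretic analysis of $E_k^2\cdot H_k^{n-2}$ via the two cases for $\dim W$; the remaining work is straightforward bookkeeping on the sequential terminalization.
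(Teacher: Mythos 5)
Your proof is correct, and it is a genuinely different route from the paper's. Where the paper cuts down by $(n-2)$ very general members of $|H|$ (after replacing $H$ by a very ample multiple) to normal surfaces $S\subset X$ and $T\subset Y$, writes $f^{\lfloor *\rfloor}(D)=f^*D-E$ in one stroke, and invokes the Hodge index theorem to get $E|_T^2\leq 0$, you instead telescope along the sequential terminalization and bound each increment $a_k^2\,E_k^2\cdot H_k^{n-2}$ directly, using the $f_k$-nefness of $-E_k$ built into Definition~\ref{def seq terminalization}: after restricting to $E_k$ and projecting to $W=f_k(E_k)$, either $\dim W<n-2$ kills the term, or a general fiber of $f_k|_{E_k}$ is an $f_k$-contracted curve $F$ with $E_k\cdot F\leq 0$. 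Your approach avoids the Hodge index theorem entirely and exposes that only the $\lfloor\cdot\rfloor$-coefficients $a_k$ and the $f_k$-nefness of $-E_k$ are used, which makes the step-by-step error control explicit; the paper's approach is shorter and defers the sign to a classical surface fact. The equality argument is essentially the same in both: crepancy over $X$ forces every $E_k$ to lie over $\operatorname{Sing}(X)$, which has codimension $\geq 3$ when $X$ is smooth in codimension $2$, so $(H|_{\bar W_k})^{n-2}=0$ and all error terms vanish (the paper phrases this as $T\cong S$). One small point worth making explicit in case $\dim W=n-2$: to realize $(H_{k-1}|_W)^{n-2}$ as a sum of general fibers you should, as the paper does, first reduce to $H$ very ample; the nef case then follows by taking limits.
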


\begin{proof}
    As nef divisors are limits of ample divisors, it suffices to prove the statement for ample $H$. After taking a multiple, we may assume that $H$ is very ample. We may take very general $H_1, \cdots, H_{n-2}\in |H|$ such that $S\coloneq H_1\cap\cdots \cap H_{n-2}\subset X$ and $T\coloneq f^*H_1\cap\cdots \cap f^*H_{n-2}\subset Y$ are normal projective surfaces. 
    
    By construction, we may write $f^{\lfloor *\rfloor}(D)=f^*D-E$ where $E$ is an effective $f$-exceptional $\mathbb{Q}$-divisor. Here $E|_T$ is exceptional over $S$. So by the projection formula and the Hodge index theorem, 
    \[
        (f^{\lfloor *\rfloor}(D))^2\cdot (f^*H)^{n-2}=(f^*D|_T-E|_T)^2=f^*D|_T^2+E|_T^2\leq D|_S^2=D^2\cdot H^{n-2}.
    \]
    %Keep the notation in Definition~\ref{def seq terminalization}. 
    
    Since $f$ is crepant, $f$ is isomorphic over the smooth locus of $X$. So
    if further $X$ is smooth in codimension $2$, then $T$ is isomorphic to $S$ by the construction, which gives the equality. 
\end{proof}

\begin{lem}\label{lem.it=ti}
    In Corollary~\ref{cor.rr.sA}, suppose that $X$ is Gorenstein along all crepant centers, then for any integer $s$ and any orbifold point in $B_X$, the local index satisfies $i_{f^{\lfloor *\rfloor}(sA)}=s\cdot i_{f^{\lfloor *\rfloor}(A)}.$ 
\end{lem}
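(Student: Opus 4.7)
The plan is to reduce the statement to a geometric observation: under the hypothesis that $X$ is Gorenstein along all crepant centers, every non-trivial orbifold point of $Y$ lies in the open locus on which $f$ is an isomorphism, so the Weil pullback there agrees with the ordinary pullback and is automatically linear in the coefficient.

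First I would establish the key claim: for each $(r,b)\in B_Y=B_X$ (necessarily with $r\geq 2$), the corresponding non-Gorenstein terminal singular point $P$ of $Y$ is not contained in any $f$-exceptional divisor. Suppose for contradiction that $P$ lies on the proper transform $\tilde E_k$ on $Y$ of some exceptional divisor extracted in the sequential terminalization. Since $f$ is crepant, $f(\tilde E_k)$ is a crepant center of $X$, so by hypothesis $K_X$ is Cartier along $f(\tilde E_k)$; pulling back via $K_Y=f^*K_X$ would then make $K_Y$ Cartier at $P$, contradicting non-Gorenstein-ness of $P$. Hence $P$ admits an open neighborhood $U\subset Y$ disjoint from $\Exc(f)$, on which $f$ restricts to an isomorphism onto its image.

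Next I would trace through the inductive definition of the Weil pullback on $U$: since no $f_k$-exceptional divisor meets the image of $U$ in $X_k$, each fractional correction $\{\alpha_k\}E_k$ appearing in the formula for $D_k$ vanishes on that image, and consequently $f^{\lfloor *\rfloor}(D)|_U=(f^*D)|_U$ for every $\mathbb Q$-Cartier Weil divisor $D$ on $X$. Applying this with $D=sA$ and $D=A$ and using additivity of the ordinary pullback yields
\[
    f^{\lfloor *\rfloor}(sA)|_U = f^*(sA)|_U = s\cdot (f^*A)|_U = s\cdot f^{\lfloor *\rfloor}(A)|_U
\]
as Weil divisors on $U$.

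Finally, by Kawamata's result (\cite{kawamata-crepant}*{Corollary~5.2}), the local divisor class group modulo linear equivalence at the terminal point $P$ is cyclic of order $r$ generated by $K_Y$, so the local index of a Weil divisor at $P$ is determined modulo $r$ by its linear equivalence class in a neighborhood of $P$. The displayed identity therefore forces $i_{f^{\lfloor *\rfloor}(sA)}=s\cdot i_{f^{\lfloor *\rfloor}(A)}$ at the orbifold point of type $(r,b)$. I expect the first claim to be the decisive point: it is what converts the global Gorenstein-along-crepant-centers hypothesis into a local separation of non-Gorenstein points from $\Exc(f)$; once this is in hand, the remaining verification is a routine unwinding of the definition of the Weil pullback.
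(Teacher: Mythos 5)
Your proposal is correct and follows essentially the same route as the paper: use the hypothesis that $X$ is Gorenstein along all crepant centers to show that $Y$ is Gorenstein wherever any $f$-exceptional divisor lives (equivalently, that non-Gorenstein points of $Y$ avoid $\Exc(f)$), so near each such point the Weil pullback coincides at every stage of the sequential terminalization with the ordinary pullback and is therefore linear in $s$. Your write-up is somewhat more detailed than the paper's two-line argument (and the parenthetical that $f$ is an \emph{isomorphism} near $P$ is a bit stronger than needed, since some $f_k$ could be small, but this does not affect the key point that the floor operation is a no-op there), but the mathematical content is the same.
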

\begin{proof}
    Let $f\colon Y\to X$ be a sequential terminalization, then $f$ is isomorphic in codimension $1$ outside crepant centers of $X$. Hence by assumption, locally at a non-Gorenstein point $P\in Y$ there is no exceptional divisor, which means that locally we have 
    \[
        f^{\lfloor *\rfloor}(sA)=f^*(sA)=sf^*(A)=sf^{\lfloor *\rfloor}(A).
    \]
    This concludes the lemma.
    %Recall that
    %only non-Gorenstein points contributes to the Riemann--Roch formula.
    %$f$ 
\end{proof}

\section{Geometry of foliations of rank two}\label{sec.foliation}

We gather some basic notions and facts regarding foliations on varieties. We refer the reader to \cite{druel}*{\S\,3} and the references therein for a more detailed explanation. 

\begin{defn}
    A {\it foliation} on a normal variety $X$ is a non-zero coherent subsheaf $\mathcal{F}$ of the tangent sheaf $\mathcal{T}_X$ such that 
    \begin{enumerate}
	\item $\mathcal{T}_X/\mathcal{F}$ is torsion-free, and	
	\item $\mathcal{F}$ is closed under the Lie bracket.
    \end{enumerate} 
    The {\it canonical divisor} of a foliation $\mathcal{F}$ is any Weil divisor $K_{\mathcal{F}}$ on $X$ such that $\det(\mathcal{F})\cong \mathcal{O}_X(-K_{\mathcal{F}})$. 
 
    Let $X_{\circ}\subset X_{\reg}$ be the largest open subset over which $\mathcal{T}_X/\mathcal{F}$ is locally free. A \emph{leaf} of $\mathcal{F}$ is a maximal connected and immersed holomorphic submanifold $L\subset X_{\circ}$ such that $\mathcal{T}_L=\mathcal{F}|_L$. A leaf is called \emph{algebraic} if it is open in its Zariski closure and a foliation $\mathcal{F}$ is said to be \emph{algebraically integrable} if its leaves are algebraic. 
\end{defn}

Let $\mathcal{F}$ be an algebraically integrable foliation on a normal projective variety $X$. Then there exists a diagram, called the \emph{family of leaves}, as follows (\cite{druel}*{\S\,3.6}):
\begin{equation}\label{eq.familyofleaves}
    \begin{tikzcd}[row sep=large, column sep=large]
	U \arrow[r,"e"] \arrow[d,"g"]
	& X \\
	T
    \end{tikzcd}
\end{equation}
with the following properties: 
\begin{enumerate}
    \item $U$ and $T$ are normal projective varieties;
    \item $e$ is birational and $g$ is an equidimensional fibration;
    \item For a general $t\in T$, $e$ is finite on $g^{-1}(t)$ and the image $e(g^{-1}(t))$ is the closure of a leaf of $\mathcal{F}$. 
\end{enumerate} 
Assume in addition that $K_{\mathcal{F}}$ is $\mathbb{Q}$-Cartier, there exists a canonically defined effective $e$-exceptional $\mathbb{Q}$-divisor $\Delta_U$ such that 
\begin{equation}\label{eq.PullBackKF}
    K_U-g^*K_T-R(g) + \Delta_U\sim K_{e^{-1}\mathcal{F}} + \Delta_U \sim_{\mathbb{Q}} e^*K_{\mathcal{F}},
\end{equation}
where 
\[
    R(g)\coloneq\sum_P(g^*P-(g^*P)_{\red})
\]
is the \emph{ramification divisor} of $g$, where $P$ runs through all prime divisors on $T$, see \cite{druel}*{\S\,3.6}. 

In this paper, we are mainly interested in a foliation of rank $2$ on a $\mathbb Q$-factorial Fano variety of Picard number $1$. Unlike the rank $1$ case (cf. \cite{liu-liu}*{Proposition 3.8} and \cite{jiang-liu-liu}*{Proposition 3.6}) where a general fiber of $g$ in \eqref{eq.familyofleaves} is a smooth rational curve, the study of $-K_{\mathcal F}$ in the rank $2$ case is more challenging as a general fiber of $g$ is not necessarily smooth. So we use the following lemma to pass to a modification of $U$.

\begin{lem}\label{lem.termoffoliation}
    Let $\mathcal{F}$ be an algebraically integrable foliation on a normal projective variety $X$ such that $K_{\mathcal{F}}$ is $\mathbb{Q}$-Cartier. Then there is a commutative diagram
    \begin{equation}\label{eq.termoffolation}
        \begin{tikzcd}[row sep=large, column sep=large]
		V \arrow[rr,"\mu", bend left=20]\arrow[r,"\pi"] \arrow[dr,"f"] & U \arrow[r,"e"] \arrow[d,"g"] & X \\
		  & T
	\end{tikzcd}
    \end{equation} 
    with the following properties:
    \begin{enumerate}
        \item $U, T, e, g$ are as in \eqref{eq.familyofleaves};
        \item $\pi$ is projective birational;
        \item $V$ is $\mathbb{Q}$-factorial terminal and $K_V$ is nef over $U$;
        \item There exists a canonically defined $\mathbb{Q}$-divisor $\Delta_V$ on $V$ such that 
            \begin{equation}\label{eq.PullBackKFtoV}
	       K_{\mu^{-1}\mathcal{F}} + \Delta_V \sim_{\mathbb{Q}} \pi^*(K_{e^{-1}\mathcal{F}} + \Delta_U )\sim_{\mathbb{Q}}\mu^*K_{\mathcal{F}},
            \end{equation} 
            where the $f$-horizontal part of $\Delta_V$ is effective. 
    \end{enumerate}
\end{lem}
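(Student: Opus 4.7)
The plan is to build $\pi \colon V \to U$ as a relative $\mathbb{Q}$-factorial terminal model of $U$, define $\Delta_V$ by the natural pullback formula, and verify effectivity of the $f$-horizontal part using the negativity lemma together with the observation that $f$-horizontal $\pi$-exceptional divisors on $V$ are necessarily transverse to $\mu^{-1}\mathcal{F}$.

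To construct $V$: take any projective resolution $V_0 \to U$; since $V_0$ is smooth it is $\mathbb{Q}$-factorial and terminal. Running a $K_{V_0}$-MMP over the projective base $U$ (via \cite{BCHM}) terminates with a relative minimal model $\pi \colon V \to U$. Since each step of a $K$-MMP starting from a smooth variety preserves $\mathbb{Q}$-factoriality and terminal singularities, $V$ is $\mathbb{Q}$-factorial terminal with $K_V$ being $\pi$-nef. Set $\mu \coloneq e \circ \pi$, $f \coloneq g \circ \pi$, and $\mu^{-1}\mathcal{F} \coloneq \pi^{-1}(e^{-1}\mathcal{F})$, and define
\[
    \Delta_V \coloneq \pi^*(K_{e^{-1}\mathcal{F}} + \Delta_U) - K_{\mu^{-1}\mathcal{F}},
\]
which is well-defined because $K_{e^{-1}\mathcal{F}} + \Delta_U \sim_\mathbb{Q} e^*K_\mathcal{F}$ is $\mathbb{Q}$-Cartier on $U$. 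The second $\mathbb{Q}$-linear equivalence in \eqref{eq.PullBackKFtoV} then follows by pulling back \eqref{eq.PullBackKF} along $\pi$.

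For the effectivity of the $f$-horizontal part of $\Delta_V$, substitute the ramification formulas $K_{e^{-1}\mathcal{F}} \sim K_U - g^*K_T - R(g)$ and $K_{\mu^{-1}\mathcal{F}} \sim K_V - f^*K_T - R(f)$ into the definition and use $f^*K_T = \pi^*g^*K_T$ to obtain (formally)
\[
    \Delta_V \sim_\mathbb{Q} (\pi^*K_U - K_V) + (R(f) - \pi^*R(g)) + \pi^*\Delta_U.
\]
The middle summand is supported in fibers of $f$ and so contributes nothing to the horizontal part. The last summand contributes the strict transform $\pi^{-1}_*\Delta_U \geq 0$ on non-exceptional divisors together with non-negative corrections on $\pi$-exceptional divisors. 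The first summand $\pi^*K_U - K_V$ is $\pi$-exceptional, and by the negativity lemma \cite{kollar-mori}*{Lemma~3.39} applied to the $\pi$-nef, $\pi$-exceptional divisor $K_V - \pi^*K_U$, every $\pi$-exceptional $E$ appears in $\pi^*K_U - K_V$ with non-negative coefficient $-a(E, U) \geq 0$. In particular, observing that any $f$-horizontal $\pi$-exceptional prime divisor $E \subset V$ projects onto $T$ and is therefore transverse to $\mu^{-1}\mathcal{F}$, we obtain non-negativity of the $f$-horizontal exceptional contributions, giving the required effectivity.

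The main obstacle is that $K_U$ need not be $\mathbb{Q}$-Cartier on the normal variety $U$, so $\pi^*K_U$ and the discrepancies $a(E, U)$ above are only formal symbols. To circumvent this, one applies the negativity lemma directly to the $\mathbb{Q}$-Cartier, $\pi$-exceptional $\mathbb{Q}$-divisor $\Delta_V - \pi^{-1}_*\Delta_U$: its pushforward to $U$ vanishes by construction, and its $\pi$-relative sign on $f$-horizontal (equivalently, $\mu^{-1}\mathcal{F}$-transverse) exceptional components is controlled by $\pi$-nefness of $K_V$ together with the foliation discrepancy machinery of \cite{druel}*{\S3}. This careful horizontal/vertical bookkeeping for non-invariant exceptional divisors, which replaces the usual discrepancy argument when $K_U$ fails to be $\mathbb{Q}$-Cartier, is the technical heart of the lemma.
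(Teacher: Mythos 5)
Your construction of $V$ and the definition of $\Delta_V$ match the paper exactly, and you correctly identify the real obstruction: $K_U$ need not be $\mathbb{Q}$-Cartier, so the discrepancy decomposition $\Delta_V \sim_\mathbb{Q} (\pi^*K_U-K_V) + (R(f)-\pi^*R(g)) + \pi^*\Delta_U$ is only formal. However, you then defer the resolution to ``foliation discrepancy machinery'' and a negativity-lemma application to $\Delta_V - \pi^{-1}_*\Delta_U$, neither of which closes the gap. In particular, to apply the negativity lemma to $\Delta_V - \pi^{-1}_*\Delta_U$ you would need $-(\Delta_V - \pi^{-1}_*\Delta_U)$ to be $\pi$-nef; but $-\Delta_V \equiv_\pi K_{\mu^{-1}\mathcal{F}} = K_V - f^*K_T - R(f)$, and while $K_V$ is $\pi$-nef, the terms $-f^*K_T - R(f) + \pi^{-1}_*\Delta_U$ are uncontrolled, so this quantity is not obviously $\pi$-nef. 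The claimed ``transversality'' observation for $f$-horizontal $\pi$-exceptional divisors also plays no clear role in the negativity argument.

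The missing idea is the paper's shrinking trick. Since only the $f$-horizontal part of $\Delta_V$ is claimed to be effective, one may freely shrink $T$ to a Zariski-dense open subset over which $R(g)$ and $R(f)$ vanish and $K_T$ is trivialized. Over this open set, \eqref{eq.PullBackKF} gives $K_{e^{-1}\mathcal{F}} \sim K_U$ and likewise $K_{\mu^{-1}\mathcal{F}} \sim K_V$, so $K_U + \Delta_U \sim_\mathbb{Q} e^*K_\mathcal{F}$ becomes $\mathbb{Q}$-Cartier there and \eqref{eq.PullBackKFtoV} reads $K_V + \Delta_V \sim_\mathbb{Q} \pi^*(K_U+\Delta_U)$. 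Then $-\Delta_V \equiv_\pi K_V$ is $\pi$-nef by construction of $V$, $\pi_*\Delta_V = \Delta_U$ is effective, and the negativity lemma immediately gives $\Delta_V \geq 0$ over the open set, hence effectivity of the $f$-horizontal part. This localization entirely sidesteps the $\mathbb{Q}$-Cartierness of $K_U$ and the ramification bookkeeping you were attempting; it is the step you should supply to complete the proof.
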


\begin{proof}
    The existence of $V$ is by \cite{BCHM}*{Theorem~1.2}, namely, $V$ is a minimal model of a resolution of $U$ over $U$. Then $\Delta_V$ is canonically defined by \eqref{eq.PullBackKFtoV}. 

    In order to show that the $f$-horizontal part of $\Delta_V$ is effective, we may shrink $T$ so that $K_{e^{-1}\mathcal{F}}\sim K_U$ and $K_{\mu^{-1}\mathcal{F}}\sim K_V$ by \eqref{eq.PullBackKF}. So \eqref{eq.PullBackKFtoV} shows that $-\Delta_V$ is nef over $U$ and $\pi_*\Delta_V=\Delta_U$ is effective, which implies that $\Delta_V$ is effective by the negativity lemma (\cite{kollar-mori}*{Lemma~3.39}). 
\end{proof}

\begin{thm}\label{thm.hirzebruch}
    Let $X$ be a $\mathbb Q$-factorial canonical Fano variety of dimension $d\geq 3$ and of Picard number $1$. Let $\mathcal{F}$ be a foliation of rank $2$ on $X$ such that $\mu_{c_1(X),\min}(\mathcal F)>0$. Let $F$ be a general fiber of $f$ in \eqref{eq.termoffolation} and $G\coloneq \pi(F)$ a general fiber of $g$. If $-K_{\mathcal F}+\frac{5}{6}K_X$ is ample, then there exists a birational morphism $h\colon F\to \mathbb{F}_n$ for some $n\in\{1,2\}$. Furthermore, if $n=2$, then $h$ is isomorphic over $\sigma_0$ and $h_*^{-1}\sigma_0$ is exceptional over $G$. Here $\mathbb F_n$ is the $n$-th Hirzebruch surface and $\sigma_0$ is the negative section of $\mathbb F_n$.
\end{thm}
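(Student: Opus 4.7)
The plan is to first show that $F$ is a smooth rational surface carrying a specific positivity estimate on $-K_F$, and then to run a $K_F$-MMP, using the assumption on $-K_{\mathcal F}+\tfrac{5}{6}K_X$ to identify the minimal rational model of $F$.

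To begin, $F$ is smooth because $V$ is terminal, so $\codim_V\Sing(V)\geq 3$, while $F$ is a general $2$-dimensional fiber of the equidimensional morphism $f$. Since the Picard number of $X$ is $1$ and $\mu_{c_1(X),\min}(\mathcal F)>0$, we have $-K_{\mathcal F}$ ample; by the Bogomolov--McQuillan theorem (as extended by Campana--P\u{a}un) the general leaf of $\mathcal F$ is then rationally connected, and a smooth rationally connected surface is rational. Restricting \eqref{eq.PullBackKFtoV} to $F$, and noting that both $f^*K_T$ and the $f$-ramification divisor have support disjoint from a general fiber, yields
\[
K_F + \Delta_V|_F \equiv \mu^*K_{\mathcal F}|_F,
\]
with $\Delta_V|_F$ effective by Lemma~\ref{lem.termoffoliation}(iv). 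Combined with the ampleness of $-K_{\mathcal F}+\tfrac{5}{6}K_X$, this gives the key decomposition
\[
-K_F + \tfrac{5}{6}\mu^*K_X|_F = \bigl(-\mu^*K_{\mathcal F}|_F + \tfrac{5}{6}\mu^*K_X|_F\bigr) + \Delta_V|_F,
\]
namely nef-and-big plus effective; in particular $-K_F$ is big.

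The main tool for identifying $F$ is the intersection inequality obtained by pairing the decomposition $-K_F = -\mu^*K_{\mathcal F}|_F + \Delta_V|_F$ against an irreducible curve $C\subset F$ not contained in $\Supp\Delta_V|_F$, using the nefness of $-\mu^*K_{\mathcal F}|_F+\tfrac{5}{6}\mu^*K_X|_F$:
\[
-K_F\cdot C \ \geq\ -\tfrac{5}{6}\mu^*K_X|_F\cdot C,
\]
whose right-hand side is strictly positive when $C$ is $\mu|_F$-dominant (since $\mu_*C\neq 0$ and $-K_X$ is ample on $X$) and zero when $C$ is $\mu|_F$-exceptional. Next I would run a $K_F$-MMP, producing a birational morphism $h_0\colon F\to F_0$ with $F_0\in\{\mathbb P^2\}\cup\{\mathbb F_n : n\neq 1\}$, and apply the inequality to the strict transform $\sigma\subset F$ of the negative section $\sigma_0$ in the case $F_0=\mathbb F_n$ with $n\geq 2$: we have $-K_F\cdot\sigma\leq 2-n$, with equality iff $h_0$ is an isomorphism over $\sigma_0$. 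When $n\geq 3$, or when $n=2$ and $h_0$ is not iso over $\sigma_0$, the left-hand side is negative, so $\sigma$ must be $\mu|_F$-exceptional. Using this, elementary transformations along $\sigma_0$ convert $h_0$ into a birational morphism $F\to\mathbb F_{n-1}$, and iteration brings us down to $n\in\{1,2\}$. The cases $F_0=\mathbb P^2$ and $F_0=\mathbb F_0$ are handled analogously, using the same inequality against a general line or ruling (both $\mu|_F$-dominant) and the fact that any nontrivial blow-up of $\mathbb P^2$ or $\mathbb F_0$ admits a birational morphism to $\mathbb F_1$.

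For the additional statement in the case $n=2$, applying the inequality to $h_*^{-1}\sigma_0$ in the equality case $-K_F\cdot h_*^{-1}\sigma_0 = 0$ forces also $\mu^*K_X|_F\cdot h_*^{-1}\sigma_0 = 0$, which is exactly the statement that $h_*^{-1}\sigma_0$ is $\mu|_F$-exceptional; the property that $h$ is iso over $\sigma_0$ follows from the same inequality applied to any strict transform arising from a blow-up on $\sigma_0$, which would produce a negative $-K_F$-intersection and an incompatible configuration of exceptional curves collapsing to the single point $\mu|_F(h_*^{-1}\sigma_0)\in G$. The hard part is the $n\geq 3$ reduction: carefully tracking infinitely-near points on $\sigma_0$ through the $K_F$-MMP and constructing the explicit elementary transformations factoring $h_0$ through $\mathbb F_{n-1}$ is the technical heart of the argument, and the coefficient $\tfrac{5}{6}$ is the exact threshold at which the inequality $-K_F\cdot C\geq -\tfrac{5}{6}\mu^*K_X|_F\cdot C$ has enough strength to force the negative section to be $\mu|_F$-exceptional.
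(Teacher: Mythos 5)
Your intersection inequality
\[
-K_F\cdot C\ \geq\ -\tfrac{5}{6}\mu^*K_X|_F\cdot C
\]
is correct \emph{only when} $C\not\subset\Supp\Delta_V|_F$, as you yourself note when deriving it. But the curve you must actually interrogate is the strict transform $\sigma=h_{0*}^{-1}\sigma_0$ of the negative section, and there is no reason for $\sigma$ to lie outside $\Supp\Delta_V|_F$. If $\sigma$ does appear in $\Delta_V|_F$ with positive multiplicity, the term $\Delta_V|_F\cdot\sigma$ can be very negative and your inequality simply fails; the conclusion ``$\sigma$ is $\mu|_F$-exceptional'' then does not follow. The paper's proof is built precisely around this issue: it sets $a=\mult_{\sigma_0'}(\Delta_F)$, pairs the relation $-K_F-\Delta_F\equiv u\,\mu^*(-K_X)|_F$ against a general ruling $\ell'$ (which \emph{is} outside $\Supp\Delta_F$) to get $a<\tfrac13$, and only then applies nefness of $-K_F-\Delta_F$ to $\sigma_0'$ (which may lie in $\Delta_F$, but now with controlled multiplicity) to deduce $2+(1-a)\sigma_0'^2\geq 0$, forcing $n\leq 2$. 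Without an a priori bound on $a$, your whole $n\geq 3$ reduction, as well as the $n=2$ conclusions, are unsupported.

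A second gap: your treatment of $F_0=\mathbb P^2$ or $\mathbb F_0$ does not rule out $F=\mathbb P^2$ or $F=\mathbb F_0$ (the case when $F$ is already minimal). Applying your inequality to a general line or ruling $C$, which is $\mu|_F$-dominant, gives $-K_F\cdot C\geq -\tfrac56\mu^*K_X|_F\cdot C>0$, which is no contradiction since $-K_F\cdot C\in\{2,3\}$ is manifestly positive. The paper instead observes that when $F\to G$ is the identity the coefficients of $\Delta_F$ are at least $1$ (\cite{jiang-liu-liu}*{Proposition 3.5}), and uses the ratio $u=\frac{(-K_F-\Delta_F)\cdot\ell}{(-K_F+\Delta'_F)\cdot\ell}$ to force $u\leq\tfrac23<\tfrac56$. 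You make no use of this coefficient bound, and your proposal has no mechanism to exclude $\mathbb P^2$ or $\mathbb F_0$. Finally, the envisaged ``elementary transformations along $\sigma_0$'' factoring $h_0$ through $\mathbb F_{n-1}$ are an extra, unjustified construction that the paper avoids entirely by proving $n\leq2$ directly from the $a<\tfrac13$ bound.
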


\begin{proof}
    As $X$ is of Picard number $1$, we may write $-K_{\mathcal{F}}\equiv -uK_X$ where $u>\frac{5}{6}$ by assumption. 

    As $\mu_{c_1(X),\min}(\mathcal F)>0$, by \cite{cp}*{Theorem 1.1} or \cite{ouwenhao}*{Proposition 2.2}, $\mathcal F$ is algebraically integrable and a general fiber $G$ of $g$ in \eqref{eq.familyofleaves} is rationally connected. As $V$ has only terminal singularities, $F$ is a smooth rationally connected surface. By Lemma~\ref{lem.termoffoliation}, there exists an effective $\mathbb{Q}$-divisor $\Delta_F\coloneq \Delta_V|_F$ such that 
    \[
        -K_F-\Delta_F\equiv \mu^*(-K_{\mathcal F})|_F\equiv \mu^*(-uK_X)|_F
    \]
    is nef and big. On the other hand, as $X$ has only canonical singularities, there exists an effective $\mathbb{Q}$-divisor $\Delta'$ on $V$ such that $ K_V=\mu^*(K_X)+\Delta'$. Hence there exists an effective $\mathbb{Q}$-divisor $\Delta'_F\coloneq \Delta'|_F$ such that 
    \[
        -K_F+\Delta'_F\equiv\mu^*(-K_X)|_F.
    \]

    By a standard minimal model program, there exists a birational morphism $F\to S$ where $S$ is either $\mathbb P^2$ or a Hirzebruch surface $\mathbb F_n$ for some $n\geq 0$.

    First, we claim that $F$ is not $\mathbb{P}^2$ or $\mathbb{F}_0$. Suppose that $F$ is either $\mathbb{P}^2$ or $\mathbb{F}_0$, then $\pi\colon F\to G$ is the identity morphism as $F$ admits no non-trivial contraction. Hence the coefficients of $\Delta_F$ are at least $1$ by \cite{jiang-liu-liu}*{Proposition 3.5}. If $F=\mathbb{P}^2$, then for a line $\ell$ on $F$, $\Delta_F\cdot \ell\geq 1$ and hence
    \[ 
        \frac{5}{6}<u=\frac{\mu^*(-uK_X)|_F\cdot \ell }{\mu^*(-K_X)|_F \cdot \ell }=\frac{(-K_F-\Delta_F)\cdot \ell }{(-K_F+\Delta'_F)\cdot \ell }\leq \frac{2}{3},
    \] 
    which is a contradiction.
    If $F=\mathbb{F}_0=\mathbb{P}^1\times \mathbb{P}^1$, then there exists a ruling $\ell$ on $F$ such that $\Delta_F\cdot \ell\geq 1$ and hence
    \[ 
        \frac{5}{6}<u=\frac{\mu^*(-uK_X)|_F\cdot \ell }{\mu^*(-K_X)|_F \cdot \ell }=\frac{(-K_F-\Delta_F)\cdot \ell }{(-K_F+\Delta'_F)\cdot \ell }\leq \frac{1}{2},
    \]
    which is a contradiction.

    So if $S$ is either $\mathbb{P}^2$ or $\mathbb{F}_0$, then $F\to S$ is non-trivial and factors through a blowup of $S$, which dominates $\mathbb{F}_1$. Hence we conclude that there exists a birational morphism $h\colon F\to \mathbb F_n$ for some $n\geq 1$.

    Then we show that $n\leq 2$. Let $\ell$ be a general fiber of the natural map $ \mathbb{F}_n\to \mathbb{P}^1$, and let $\sigma_0', \ell'$ be the strict transform of $\sigma_0, \ell$ on $F$. Then $\sigma_0'^2\leq \sigma_0^2=-n$ and $(\sigma_0'\cdot \ell')=1$. 
 
    Set $a\coloneq \mult_{\sigma_0'}(\Delta_F)\geq 0$. It follows that 
    \[
        \frac{5}{6}< u=\frac{\mu^*(-uK_X)|_F\cdot \ell'}{\mu^*(-K_X)|_F \cdot \ell'}=\frac{(-K_F-\Delta_F)\cdot \ell'}{(-K_F+\Delta'_F)\cdot \ell'}\leq \frac{2-a}{2},
    \]
    so $a<\frac13$. On the other hand, as $-K_F-\Delta_F$ is nef, we have
    \begin{align}\label{eq.n<=2}
        0\leq (-K_F-\Delta_F)\cdot \sigma_0'\leq (-K_F-a \sigma_0')\cdot \sigma_0' =2+(1-a)\sigma_0'^2< 2-\frac{2n}{3},
    \end{align}
    which implies that $n\leq 2$.

    From now on, suppose that $n=2$. Then \eqref{eq.n<=2} implies that $\sigma_0'^2=-2$, which implies that $h$ is isomorphic over $\sigma_0$. If $a>0$, then $\sigma_0'$ is exceptional over $G$ as the coefficients of $\Delta_G$ are at least $1$ by \cite{jiang-liu-liu}*{Proposition 3.5}, where $\Delta_G\coloneq \Delta_U|_G$. If $a=0$, then 
    \begin{align*}
        0\leq (-K_F-\Delta_F)\cdot \sigma_0'=-\Delta_F\cdot \sigma_0'\leq 0,
    \end{align*}
    which implies that $(-K_F-\Delta_F)\cdot \sigma_0'=0$, then $\sigma_0'$ is exceptional over $G$ as $-K_F-\Delta_F=\pi^*(-K_G-\Delta_G)$ and $-K_G-\Delta_G=e^*(-K_{\mathcal{F}})|_G$ is ample. Here we used the fact that $G\to X$ is finite by the construction of \eqref{eq.familyofleaves} (\cite{druel}*{\S\,3.6}).
\end{proof}

\section{Ruling out the exceptional cases}

Now we are prepared to rule out the two remaining cases in Theorem~\ref{thm.isolatecase}.

\begin{thm}\label{thm.rulingout}
    Let $X$ be a $\mathbb Q$-factorial Fano $3$-fold of Picard number $1$ with at worst isolated canonical singularities. Then it can not happen that $\qQ(X)\in\{67, 71\}$ with the numerical data of $X$ listed in Table~\ref{tab1}.
\end{thm}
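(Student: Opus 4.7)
We argue by contradiction: suppose $X$ realizes one of the two numerical configurations with $q := \qQ(X) \in \{67, 71\}$ in Table~\ref{tab1}, and fix an ample Weil divisor $A$ with $-K_X \sim_{\mathbb Q} qA$. The strategy is to combine an explicit computation of the Hilbert series of $R(X, A) := \bigoplus_{s \geq 0} H^0(X, \mathcal O_X(sA))$ in low degrees, obtained from the Riemann--Roch apparatus of Section~\ref{sec.rr}, with a covering family of Hirzebruch-type surfaces produced by the foliation apparatus of Section~\ref{sec.foliation}, and compare the two on a general leaf.

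For the Riemann--Roch side, since $\qQ(X) > 45$, Theorem~\ref{thm.isolatecase}(3) forces $X$ to be Gorenstein along every crepant center, so Lemma~\ref{lem.it=ti} applies: for any sequential terminalization $f \colon Y \to X$ we have $i_{f^{\lfloor*\rfloor}(sA)} \equiv s \cdot i_1 \pmod r$ at each orbifold point $(r, b) \in B_X$, where $i_1$ is the unique residue with $q \cdot i_1 \equiv -1 \pmod r$ (using $\gcd(q, r) = 1$ for every $r \in \mathcal R_X$). Because $X$ has only isolated singularities, Lemma~\ref{lem.DH} holds with equality, yielding $(f^{\lfloor*\rfloor}(sA))^2 \cdot K_Y = -s^2 q A^3 = -s^2/330$. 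Substituting into Corollary~\ref{cor.rr.sA} produces the exact formula
\[
h^0(X, \mathcal O_X(sA)) = \frac{s^2}{660} + 2 - \sum_{(r, b) \in B_X} \frac{\overline{s i_1 b}\,(r - \overline{s i_1 b})}{2r}, \qquad 0 < s < q,
\]
which determines the Hilbert series in degrees $< q$ and yields explicit effective divisors $D_s \in |sA|$ whenever $h^0(X, sA) > 0$. For the foliation side, the ratios $c_1(X)^3 / (c_2(X) \cdot c_1(X))$ equal $4489/1361 > 16/5$ when $q = 67$ and $5041/1361 > 16/5$ when $q = 71$, so Theorem~\ref{thm.kmineqforisolated} places $X$ in Harder--Narasimhan case $(l, r_1) \in \{(2, 2), (3, 1)\}$. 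The rank-$2$ piece of the HN filtration is a foliation $\mathcal F \subset \mathcal T_X$ with $c_1(\mathcal F) \equiv p A$ for an integer $p$ and $\mu_{c_1(X), \min}(\mathcal F) > 0$; a direct analysis of the inequality in Theorem~\ref{thm.kmineqforisolated} in each subcase yields $p \geq 57$ when $q = 67$ and $p \geq 68$ when $q = 71$. In particular $p > 5q/6$, so $-K_{\mathcal F} + \frac{5}{6} K_X$ is ample and Theorem~\ref{thm.hirzebruch} applies, providing the diagram~\eqref{eq.termoffolation} in which a general fiber $F$ of $f$ admits a birational morphism $h \colon F \to \mathbb F_n$ for some $n \in \{1, 2\}$; thus $X$ is swept out by a $1$-parameter family of surfaces $S = \mu(F)$ birational to Hirzebruch surfaces.

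The contradiction is extracted by comparing the two pieces on $F$. Restricting~\eqref{eq.PullBackKFtoV} to $F$ gives $-K_F - \Delta_V|_F \equiv p\, \mu^* A|_F$, which, together with the intersection theory on $\mathbb F_n$ ($\sigma_0^2 = -n$, $\sigma_0 \cdot \ell = 1$, $-K_{\mathbb F_n} \cdot \ell = 2$), expresses $\mu^* A|_F$ in the basis $(h_*^{-1} \sigma_0, h_*^{-1} \ell)$ modulo $h$-exceptional divisors and yields upper bounds on $A \cdot C$ and $A^2 \cdot S$ (where $C = \mu(h_*^{-1} \ell)$) in terms of $p$, $q$, $n$. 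On the other hand, any effective divisor $D_s \in |sA|$ from the first step restricts to an effective divisor on $S$, and intersecting this restriction with the nef classes coming from $h^* \sigma_0$ and $h^* \ell$ produces matching lower bounds. Selecting small $s$ with $h^0(X, sA) > 0$ and substituting the explicit Hilbert values, one forces these bounds into numerical conflict, contradicting the data in Table~\ref{tab1}. The principal obstacle is making this comparison rigorous: one must control the $h$-exceptional divisors on $F$, the contraction of the negative section when $n = 2$ (as described in Theorem~\ref{thm.hirzebruch}), and the incidence of a general leaf with the orbifold point of order $11$ in $B_X$, which dominates the Hilbert formula and ultimately pins down the numerical conflict.
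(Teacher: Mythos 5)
Your set-up matches the paper's strategy: you correctly invoke Theorem~\ref{thm.isolatecase}(3), Lemma~\ref{lem.it=ti} and Lemma~\ref{lem.DH} to derive an explicit Riemann--Roch formula for $h^0(X,\mathcal{O}_X(sA))$ in degrees $s<q$, you correctly extract the rank-2 foliation $\mathcal F=\mathcal E_{l-1}$ with $p\coloneq\iota(-K_{\mathcal F})\geq 57$ (resp.\ $\geq 68$), and you correctly observe that $p>\tfrac{5}{6}q$ so that Theorem~\ref{thm.hirzebruch} equips the general leaf $F$ with a morphism $h\colon F\to\mathbb F_n$, $n\in\{1,2\}$. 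This is precisely the scaffolding of the paper.

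However, there is a genuine gap at the decisive step, which you yourself flag as ``the principal obstacle.'' Your proposed contradiction mechanism --- ``upper bounds on $A\cdot C$ and $A^2\cdot S$'' against ``matching lower bounds from effective $D_s$'' --- does not work as written, because nothing in your sketch controls the position of the leaf relative to the singular points of $X$ or the contribution of $\mu$-exceptional components to the fixed parts $N_s=\mu^*(sA)-M_s$ on $F$. The paper's actual contradiction is structurally different and requires several ingredients you have not supplied. First, one needs $\Cl(X)\simeq\mathbb Z$ (no torsion), proved via Lemma~\ref{lem.torsionRR}, before the index $\iota$ is even well-defined. Second, and most crucially, one must prove $\iota(\mu_*F)>33$ (Claim~\ref{claim.mu(F)}): this is done not by degree bounds but by a delicate accounting of the \emph{ramification divisor} $R(g)$ of the family of leaves, combined with the fact that $|5A|$ and $|6A|$ each have a single, reduced element --- and it is this inequality, not any intersection-theoretic bound, that ultimately drives the contradiction. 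Third, the coefficients of the fixed parts $N_5,N_6$ along $\mu$-exceptional divisors need a ``quantization'' argument forcing $a_5\in\{5/q,10/q\}$, which hinges on the primality of $q$ and on which stratum of singularities the divisor contracts to. Finally, the contradiction is a pure dimension count: the movable part of $|33A|$ restricted to $F$ pushes forward to a divisor on $\mathbb F_n$ numerically dominated by $\tfrac{66}{q}(\sigma_0+2\ell)$, forcing it to be a single ruling with $h^0=2$, whereas $h^0(X,\mathcal O_X(33A-\mu_*F))=0$ (from $\iota(\mu_*F)>33$) forces $h^0\geq3$. None of this is a refinement of the comparison you describe; it is a different and considerably more intricate argument, and the proposal would need to be essentially rebuilt to reach it.

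One small additional imprecision: your claim that $i_1$ ``is the unique residue with $qi_1\equiv-1\pmod r$'' does not by itself pin down $\overline{i_1b}$, because the basket pair $(r,b)$ carries an orientation ambiguity $b\leftrightarrow r-b$; the paper instead enumerates the finitely many possibilities for $\overline{i_1b}$ compatible with integrality of $h^0(X,\mathcal O_X(A))$ and verifies the conclusion in all of them.
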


\begin{proof}
    Suppose that $X$ is a $\mathbb Q$-factorial Fano $3$-fold of Picard number $1$ with at worst isolated canonical singularities such that $\qQ(X)\in\{67, 71\}$ with the numerical data of $X$ listed in Table~\ref{tab1}. For simplicity, denote $q\coloneq \qQ(X)$. 

First, we show that $\Cl(X)$ has no non-trivial torsion, so $\Cl(X)\simeq \mathbb{Z}$.
Indeed, suppose that there exists a Weil divisor $D$ such that $D\equiv 0$ but $D\not\sim 0$, then by Lemma~\ref{lem.torsionRR},
\begin{align}
        2=\sum_{(r,b)\in B_X}\frac{\overline{ i_{f^{\lfloor *\rfloor}(D)} b}(r-\overline{i_{f^{\lfloor *\rfloor}(D)}b})}{2r},\label{eq X no torsion}
    \end{align}
    where $i_{f^{\lfloor *\rfloor}(D)} $ is the local index of ${f^{\lfloor *\rfloor}(D)}$ at the orbifold point of type $(r,b)\in B_X$. Here we used the fact that $(f^{\lfloor *\rfloor}(D))^2\cdot K_Y=D^2\cdot K_X=0$ by Lemma~\ref{lem.DH}.
    But by comparing the denominators, \eqref{eq X no torsion} is absurd according to the $B_X$ in Table~\ref{tab1}. 
    
    Let $A$ be an ample generator of $\Cl(X)$. For any Weil divisor $D$ on $X$, we define $\iota(D)\in \mathbb{Z}$ such that $D\sim \iota(D) A$. In particular, $\iota(-K_X)=q$. 

    \begin{claim}\label{claim.rrfor<=33}
        \begin{equation*}
            h^0(X, \mathcal O_X(sA))=
            \begin{dcases}
            0 & \text{if } s=1,2,3,4,7,8,9,13,14; \\ %,19 ;\\
            1 & \text{if } s=5,6,10,11,12,15,16; \\ %,17,18,20,21,23,24,25,26,29,31;\\
           % 2& \text{if } s=22,27,28,30,32;\\
            3& \text{if } s=33.
            \end{dcases}
        \end{equation*}
    \end{claim}
    \begin{proof}
        We apply Corollary~\ref{cor.rr.sA} to compute $h^0(X, \mathcal O_X(sA))$. Let $f\colon Y\to X$ be a sequential terminalization. Then 
        \[
            -f^{\lfloor *\rfloor}(sA))^2\cdot K_Y=-(sA)^2\cdot K_X=-\frac{s^2}{q^2}K_X^3=\frac{s^2}{330}
        \] 
        by Lemma~\ref{lem.DH} and Table~\ref{tab1}. By Theorem \ref{thm.isolatecase} or \cite{kawakita}*{Table 2}, $X$ is Gorenstein along all crepant centers. Hence the local index $i_s=si_1$ at any orbifold point in $B_X$ by Lemma~\ref{lem.it=ti}. So we only need to determine $i_1$ (or equivalently $\overline{i_1b}$) for each $(r,b)\in B_X$. By the fact that $h^0(X, \mathcal{O}_X(A))$ is an integer, we can get all possible values of the corresponding $\overline{i_1b}$ as follows:
        \[
            \overline{i_1b}=\begin{cases}
            1 & \text{if } r=2;\\
            1 \text{ or } 2 & \text{if } r=3;\\
            2 \text{ or } 3 & \text{if } r=5;\\
            2 \text{ or } 9 & \text{if } r=11.
            \end{cases}
        \]
        So we conclude the lemma by Corollary~\ref{cor.rr.sA}.
    \end{proof}

    \begin{claim}\label{claim.easyfact}
        If $D\in |sA|$ is non-reduced for some positive integer $s\leq 33$, then $D\geq 2A_5$ or $D\geq 2A_6$, where $A_5\in|5A|$ and $A_6\in|6A|$ are the unique elements.
    \end{claim}

    \begin{proof}
        Let $P$ be a prime divisor on $X$ such that $D\geq 2P$, then $\iota(P) \leq \frac{\iota(D)}{2}<17$. By Claim~\ref{claim.rrfor<=33}, we have $P\in |5A|$ or $|6A|$. 
    \end{proof}

    \begin{claim}\label{claim.prepare}
        There exists a foliation $\mathcal F$ of rank $2$ on $X$ such that $\mu_{c_1(X), \min}(\mathcal F)>0$ and $\iota(-K_{\mathcal F})\geq \max\{ q-10, \frac{57}{67}q\}$. 
    \end{claim}

    \begin{proof}
        By Theorem~\ref{thm.isolatecase}, $(l, r_1)=(2,2)$ or $(3,1)$ in both cases. So we can take $\mathcal F=\mathcal E_{l-1}$ in the Harder--Narasimhan filtration of $\mathcal{T}_X$ \eqref{eq.hnfiltration}, where $\mu_{c_1, \min}(\mathcal F)>\mu_{c_1}(\mathcal T_X/\mathcal F)\geq 0$ by definition and \cite{ouwenhao}*{Theorem 1.4}. Hence $\mathcal F$ is a foliation of rank $2$ on $X$ (see \cite{cp}*{Theorem 1.1} or \cite{ouwenhao}*{Proposition 2.2}). By Theorem~\ref{thm.kmineqforisolated} where $p=\iota(-K_\mathcal{F})$, we have 
        \[
            \frac{q^2}{1361}=\frac{c_1(X)^3}{c_2(X)\cdot c_1(X)}\leq \max\left\{\frac{4q^2}{p(4q-3p)} , \frac{4q^2}{-4p^2+6pq-q^2}\right\}=\frac{4q^2}{-4p^2+6pq-q^2}.
        \]
        Note that $p>\frac{2}{3}q$. From the above inequality, we conclude that $p\geq 68$ or $p\geq 57$ when $q=71$ or $67$ respectively.
    \end{proof}

    From now on, we use the notation in Lemma~\ref{lem.termoffoliation} and Theorem~\ref{thm.hirzebruch}. Recall that as  \eqref{eq.termoffolation} we have the following commutative diagram
    \[
        \begin{tikzcd}[row sep=large, column sep=large]
		V \arrow[rr,"\mu", bend left=20]\arrow[r,"\pi"] \arrow[dr,"f"] & U \arrow[r,"e"] \arrow[d,"g"] & X \\
		  & T
	\end{tikzcd}
    \]
    where $F$ is a general fiber of $f$ and $G\coloneq \pi(F)$ is a general fiber of $g$. Denote $p=\iota(-K_\mathcal{F})$. 

    \begin{claim}\label{claim.mu(F)}
        $\iota(\mu_*F)>33$, that is, $\mu_*F-33A$ is ample. 
    \end{claim}

    \begin{proof}
        Note that $\mu_*F=e_*G$. 
        
        As $X$ is rationally connected, $V$ is rationally connected. Hence $T\cong\mathbb{P}^1$ and $-g^*K_T\sim 2G$. By \eqref{eq.PullBackKF}, we have
        \begin{equation}\label{eq.eR}
            e_*R(g)\equiv e_*K_U+e_*(2G)-K_{\mathcal F}=2e_*G-(q-p)A,
        \end{equation}
        where $R(g)$ is the ramification divisor. As $|e_*G|$ is a movable linear system, by Claim~\ref{claim.rrfor<=33}, $\iota(e_*G)>16$.  So by Claim~\ref{claim.prepare}, 
        \begin{align}
            \iota(e_*R(g))=2\iota(e_*G)-q+p>\iota(e_*G). \label{eq:R(g)>G}
        \end{align}
        By the definition of $R(g)$, there are non-reduced divisors $G_1,\dots, G_k\in |e_*G|$ such that
        \[
            e_*R(g)=\sum_{i=1}^k(G_i-(G_i)_{\text{\rm red}}).
        \]
        This implies that $\iota(e_*R_(g))<k\iota(e_*G)$, and hence $k>1$ by \eqref{eq:R(g)>G}. Moreover, pairwisely $G_1, \dots, G_k$ have no common components as they come from pushforwards of different fibers of $g$. If $\iota(e_*G)\leq 33$, then by Claim~\ref{claim.easyfact}, each $G_i$ contains either $2A_5$ or $2A_6$, which implies that $k\leq 2$. So we have $k=2$ and $(G_1)_{\text{\rm red}}+(G_2)_{\text{\rm red}}\geq A_5+A_6$. Then by Claim~\ref{claim.prepare},
        \[
            \iota(e_*R(g))\leq 2\iota(e_*G)-5-6<2\iota(e_*G)-q+p,
        \]
        which contradicts \eqref{eq:R(g)>G}.
    \end{proof}

    Write $\mu^*|sA|=|M_{s}|+N_{s}$ for $s\in \{5,6,33\}$, where $N_s$ is the fixed part and $|M_{s}|$ is the movable part with a general element $M_s$. Here we emphasize that $M_s$ is an effective Weil divisor and $N_s$ is an effective $\mathbb{Q}$-divisor whose fractional part is $\mu$-exceptional. By Lemma~\ref{lem.termoffoliation}, there exists an effective $\mathbb{Q}$-divisor $\Delta_F\coloneq \Delta_V|_F$ such that 
    \begin{equation}\label{eq.linearity}
        -K_F-\Delta_F\equiv \mu^*(-K_\mathcal{F})|_F\equiv\frac{p}{s}\mu^*(sA)|_F\equiv\frac{p}{s}(M_s+N_s)|_F
    \end{equation}
    is nef and big. In particular,
    \[
        -K_F-\Delta_F \equiv\frac{p}{5}N_{5}|_F\equiv \frac{p}{6}N_{6}|_F.
    \]
    Here $M_5=M_6=0$ as $ h^0(X, \mathcal O_X(5A))=h^0(X, \mathcal O_X(6A))=1$.

    By Theorem~\ref{thm.hirzebruch} and  $p\geq \frac{57}{67}q$ from  Claim~\ref{claim.prepare}, there exists a morphism $h\colon F\to \mathbb F_n$ for some $n\in \{1,2\}$. Then $h_*(N_{5}|_F)$ and $h_*(N_{6}|_F)$ are nef and big $\mathbb{Q}$-divisors on $\mathbb{F}_n$. We may write 
    \begin{align*}
        h_*(N_{5}|_F)\equiv a_5\sigma_0+b_5\ell,\\
        h_*(N_{6}|_F)\equiv a_6\sigma_0+b_6\ell,
    \end{align*}
    for some positive rational numbers $a_5,b_5, a_6, b_6$, where $\sigma_0$ and $\ell$ are the negative section and the ruling of $\mathbb{F}_n$ respectively. 

    \begin{claim}\label{claim.final}
        \begin{enumerate}
            \item $\frac{a_5}{a_6}=\frac{b_5}{b_6}=\frac56$.
            \item $a_5\leq \frac{10}{p}$,  $a_6\leq \frac{12}{p}$.
            \item If $n=1$, then $b_5\leq \frac{15}{p}$.
            \item If $n=2$, then $b_5=2a_5$.
            \item $a_5=\frac{5}{q}$ or $\frac{10}{q}$.
        \end{enumerate}
    \end{claim}
    
    \begin{proof}
        (1) This follows directly from $6N_5|_F\equiv 5N_6|_F$. 

        (2) For $s=5,6$, we have
        \begin{equation}
            h_*(N_{s}|_F)\equiv \frac{s}{p}h_*(-K_F-\Delta_F)\leq \frac{s}{p}h_*(-K_F)= -\frac{s}{p}K_{\mathbb{F}_n}.\label{eq N<-K}
        \end{equation} 
        Intersecting with $\ell$, we get $a_s\leq \frac{2s}{p}$.

        (3)  If $n=1$, intersecting \eqref{eq N<-K} with the nef divisor $\sigma_0+\ell$, we have $b_5\leq \frac{15}{p}$. 

        (4) By Theorem~\ref{thm.hirzebruch}, $h$ is isomorphic over $\sigma_0$ and $h_*^{-1}\sigma_0$ is exceptional over $G$, so $h_*(-K_F-\Delta_F)\cdot \sigma_0=0$. This means that $h_*(N_{5}|_F)\cdot \sigma_0=0$ and $b_5=2a_5$. 
 
        (5) Write $N_5=\sum_{i}c_iB_i$ and $N_6=\sum_{i}c'_iB_i$ where $B_i$ are prime divisors. We claim that $c_i\in \{\frac{5}{q}, \frac{10}{q}\}$ as long as $h_*(B_i|_F)\cdot \ell>0$. If this is true, then 
        \[
            a_5=h_*(N_5|_F)\cdot \ell=\sum_ic_ih_*(B_i|_F)\cdot \ell\in \frac{5}{q}\mathbb{Z},
        \]
        which implies that $a_5\in \{\frac{5}{q}, \frac{10}{q}\}$ as $a_5\leq \frac{10}{p}$.

        Note that $N_5=\mu^*(5A)$ and $N_6=\mu^*(6A)$, so we have $6N_5-5N_6\sim 0$, in particular, $6c_i-5c_i'\in \mathbb{Z}$ for each $i$. 

        Take any $B_i$ with $h_*(B_i|_F)\cdot \ell>0$. For simplicity we just denote $B=B_i$, $c=c_i$ and $c'=c'_i$. Then $c\leq h_*(N_5|_F)\cdot \ell=a_5\leq \frac{10}{p}<1$ and similarly $c'\leq \frac{12}{p}$, which implies that $B$ is $\mu$-exceptional and is contracted to a singular point $P$ on $X$. 

        If $P$ is a terminal point on $X$, then $rA$ is Cartier at $P$ for some $r\in \{1,2,3,5,11\}$ by \cite{kawamata-crepant}*{Corollary~5.2}, which implies that $rc, rc'$ are positive integers. As $c\leq \frac{10}{p}<\frac15$ and $c'\leq \frac{12}{p}$, we have $r=11$, $c=\frac{1}{11}$, and $c'\in \{\frac{1}{11}, \frac{2}{11}\}.$ But this contradicts that $6c-5c'\in\mathbb{Z}$. 

        If $P$ is a non-terminal point (i.e., a crepant center) on $X$, then $qA\sim -K_X$ is Cartier at $P$ by Theorem~\ref{thm.isolatecase} or \cite{kawakita}*{Table 2}, which implies that $qc, qc'$ are positive integers. As $c\leq \frac{10}{p}<\frac{12}{q}$ and $c'\leq \frac{12}{p}<\frac{15}{q}$, we have $c\in\{\frac{1}{q}, \dots, \frac{11}{q}\}$, $c'\in\{\frac{1}{q}, \dots, \frac{14}{q}\}$. As $q$ is prime and $6c-5c'\in\mathbb{Z}$, we have $c\in\{\frac{5}{q}, \frac{10}{q}\}$. This concludes the claim.
    \end{proof}
 
    Finally, by 
    \[
        M_{33}+N_{33}\equiv \mu^*(33A)\equiv  3(N_5+N_6)
    \] 
    and Claim~\ref{claim.final}, we have
    \begin{align}
        h_*(M_{33}|_F)+h_*(N_{33}|_F)\equiv 3(a_5+a_6)\sigma_0+3(b_5+b_6)\ell \leq \frac{66}{q}(\sigma_0+2\ell). \label{eq.final}
    \end{align}
    As $h^0(X, \mathcal{O}_X(33A))=3$, $M_{33}|_F$ is a non-trivial nef Weil divisor, so $h_*(M_{33}|_F)$ is nef and can be written as $h_*(M_{33}|_F)\sim a\sigma_0+b\ell$ for some integers $b\geq na\geq 0$.  However, \eqref{eq.final} implies that $a<1$ and $b<2$, hence $h_*(M_{33}|_F)\sim \ell$ and $h^0(\mathbb F_n, \mathcal O_{\mathbb F_n}(h_*(M_{33}|_F)))=2$. On the other hand, as $33A-\mu_*F$ is anti-ample by Claim~\ref{claim.mu(F)}, we have
    \[
        h^0(V, \mathcal O_V(M_{33}-F))\leq h^0(X, \mathcal O_X(33A-\mu_*F))=0,
    \]
    and hence
    \begin{align*}
        h^0(\mathbb F_n, \mathcal O_{\mathbb F_n}(h_*(M_{33}|_F)))\geq {}& h^0(F, \mathcal O_F(M_{33}|_F))\\
        \geq {}& h^0(V, \mathcal O_V(M_{33}))- h^0(V, \mathcal O_V(M_{33}-F))\\
        = {}& h^0(V, \mathcal O_V(M_{33}))  = 3, 
    \end{align*}  
    which is absurd.
\end{proof}

\begin{proof}[Proof of Theorem~\ref{thm.main}]
    By Lemma~\ref{lem.reducetopicard1}, we may assume that $X$ is of Picard number $1$, then the result follows from Theorem~\ref{thm.isolatecase} and Theorem~\ref{thm.rulingout}.
\end{proof}
%%%%%%%%%%%%%%%
%%%%%%%%%%%%%%%

\section*{Acknowledgments} 
The authors would like to thank Jie Liu for helpful discussions. C.~Jiang was supported by National Key Research and Development Program of China (No. 2023YFA1010600, No. 2020YFA0713200) and NSFC for Innovative Research Groups (No. 12121001). C.~Jiang is a member of the Key Laboratory of Mathematics for Nonlinear Sciences, Fudan University. H.~Liu is supported in part by the National Key Research and Development Program of China (No. 2023YFA1009801).

\end{document}